\newcommand{\C}{\mathcal S}
\newcommand{\Aut}{\mathrm{Aut}}
\newcommand{\proset}{\,\mathrel{\lower 4pt\hbox{$\scriptscriptstyle/$}
\mkern -14mu\subseteq }\,}
\newcounter{thmcount}
\newtheorem{defn}{Definition}[section]
\newtheorem{lem}[defn]{Lemma}
\newtheorem{prop}[defn]{Proposition}
\newtheorem*{prop*}{Proposition}
\newtheorem{thm}[defn]{Theorem}
\newtheorem{cor}[defn]{Corollary}
\newtheorem*{claim*}{Claim}
\theoremstyle{remark}
\theoremstyle{remark}
\theoremstyle{remark}
\newtheorem{exmp}[defn]{Example}
\theoremstyle{remark}
\theoremstyle{remark}
\theoremstyle{remark}
\theoremstyle{remark}
\theoremstyle{remark}
\theoremstyle{remark}
\newtheorem{rmk}[defn]{Remark}
\theoremstyle{remark}
\numberwithin{equation}{section}
\title{Model-theoretic $K_1$ for modules over semisimple rings: (weak) Morita invariance}
\author{Sourayan Banerjee$^1$ and Amit Kuber$^2$}
\address{$^{1,2}$Department of Mathematics and Statistics\\ Indian Institute of Technology, Kanpur\\Uttar Pradesh-208016, India}
\email{$^1$sourayanbanerjee@gmail.com, $^2$askuber@iitk.ac.in (Corresponding author)}
\keywords{$K$-theory of model-theoretic structures, $K_1$ of modules, semisimple ring, Morita invariance}
\date{}
\subjclass[2020]{03C60, 19B99, 19B14}
\date{} 
\begin{document}
\begin{abstract}
This paper is a sequel to a paper by the same authors, where they defined $K$-groups of model-theoretic structures, and computed $K_1$ of free modules over PIDs. In this paper, we compute $K_1$ of a right $M_q(R)$-module $M$, where $R$ is a division ring, $q\geq1$, and $|M_q(R)|\neq 2$. As a consequence, we obtain a (weak) Morita invariance $K_1(R_R)\cong K_1((M_q(R))_{M_q(R)})$ for all division rings $R$ and $q\geq 1$. Finally, we compute $K_1$ of a module over a semisimple ring by showing that the model-theoretic $K_1$ commutes with finite product of modules. We also show that the algebraic $K_1$ of a finite product of infinite matrix rings embeds into the model-theoretic $K_1$ of their right regular modules. 
\end{abstract}
\maketitle

\section{Introduction}

Let $L$ be a language and $M$ be a first-order $L$-structure. Motivated by, and extending Kraji\v{c}ek and Scanlon's definition of the Grothendieck ring $K_0(M)$ \cite{Kra}, the authors defined model-theoretic $K$-groups $K_n(M)$ for $n\geq 0$ in \cite{BK}. The latter definition employs Quillen's famous $S^{-1}S$-construction applied to the small symmetric monoidal groupoid $(\C(M),\sqcup,\emptyset)$, whose objects are definable subsets (with parameters) of finite powers of $M$, whose morphisms are definable bijections, and $\sqcup$ is the disjoint union.

Given a unital ring $R$, a right module $M_R$ could be thought of as a structure for the the language $L_R:=\langle+,-,0,\{\cdot_r\mid r\in R\}\rangle$, where $\cdot_r$ is unary function symbol describing the right action of the scalar $r$. After meticulous computations of $K_1(M_R)$ for free modules over some PIDs in \cite{BK}, we shift our attention to modules over semisimple rings in this paper. 

Our first major contribution is the following.
\begin{restatable}{theom}{A}\label{K1Final}
Suppose $R$ be a division ring, $q\geq 1$, $|M_q(R)|\neq2$, and $M$ is a non-zero $M_q(R)$-module. If $R^\times$ is the group of units in $R$, then
\begin{equation*}
K_1(M_{M_q(R)})\cong\begin{cases}\mathbb Z_2&\text{if }M\text{ is finite};\\\mathbb Z_2\oplus\bigoplus\limits_{i=1}^\infty\left(GL_{iq}(R)^{ab}\oplus\mathbb Z_2\right)\cong\mathbb Z_2\oplus\bigoplus\limits_{i=1}^\infty\left(\frac{R^\times}{[R^\times , R^\times]}\oplus\mathbb Z_2\right)&\text{otherwise}.\end{cases}
\end{equation*}
\end{restatable}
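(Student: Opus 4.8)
The plan is to exploit Morita theory to reduce the combinatorics of definable sets in an $M_q(R)$-module to affine geometry over the division ring $R$, and then to stratify the symmetric monoidal groupoid $\C(M)$ by dimension so that $K_1$ splits as a direct sum over the strata. Since $R$ is a division ring, $M_q(R)$ is semisimple with unique simple right module $R^q$, so $M\cong (R^q)^{(\kappa)}$ for a cardinal $\kappa$, and $M$ is finite iff $R$ and $\kappa$ are both finite. The key input is pp-elimination for modules: every definable subset of $M^n$ is a finite Boolean combination of cosets of pp-definable subgroups, and over the semisimple ring $M_q(R)$ these subgroups are exactly the $M_q(R)$-submodules of $M^n$. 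Thus the definable sets are the constructible sets of the affine geometry of $M^n$ over $M_q(R)$, and definable bijections restrict to affine-linear maps on each stratum. Here I record the Morita identity $\Aut(\text{an $i$-dimensional submodule})=\GL[i](M_q(R))\cong\GL[iq](R)$, which is where the index $iq$ in the statement originates.

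For the finite case, if $M$ is finite then $\C(M)$ is equivalent to the groupoid of finite sets under disjoint union, whose $K_1$ is $\mathbb Z_2$ detected by the sign of a permutation; the hypothesis $|M_q(R)|\neq 2$ is not needed here, and we obtain $K_1(M_{M_q(R)})\cong\mathbb Z_2$.

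For the infinite case, the heart of the argument is a d\'evissage/stratification. I would introduce a dimension function on definable sets and show that, up to definable bijection, every definable set has a normal form as a disjoint union of homogeneous pieces, one for each dimension $i\geq 0$, a dimension-$i$ piece being a finite union of cosets of an $i$-dimensional submodule. The monoid $\pi_0(\C(M))$ of isomorphism classes is then graded by dimension, with the grades mutually independent, and I would invoke the general computation of $K_1$ of a symmetric monoidal groupoid from \cite{BK} level by level to obtain $K_1(M_{M_q(R)})\cong\bigoplus_{i\geq 0}A_i$, where $A_i$ is the stabilised, abelianised automorphism group of the dimension-$i$ stratum. I then compute each $A_i$: the automorphisms of a dimension-$i$ stratum split into the linear automorphisms of the free part, contributing $\GL[iq](R)^{ab}$, and the permutations of the finitely supported family of parallel cosets, contributing the sign $\mathbb Z_2$. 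The condition $|M_q(R)|\neq 2$ guarantees enough scalars to realise the transpositions, and hence the sign, as genuine definable bijections, so no collapse occurs; this gives $A_0\cong\mathbb Z_2$ and $A_i\cong\GL[iq](R)^{ab}\oplus\mathbb Z_2$ for $i\geq 1$. The Dieudonn\'e determinant supplies $\GL[iq](R)^{ab}\cong R^\times/[R^\times,R^\times]$, yielding the second isomorphism in the statement.

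The main obstacle will be the stratification step: proving that definable bijections cannot mix strata of different dimensions in a way that creates extra relations, so that $K_1$ genuinely decomposes as a direct sum rather than collapsing to a single stable automorphism group. Controlling definable bijections between definable sets of differing shapes, and ensuring they respect the dimension filtration up to lower-dimensional corrections that vanish in $K_1$, is where the real work lies; this is presumably also where the hypothesis $|M_q(R)|\neq 2$ does its essential work, by supplying the transpositions needed to separate the $\mathbb Z_2$ summands from the $\GL[iq](R)^{ab}$ summands.
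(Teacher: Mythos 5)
Your outline reproduces the correct answer and several correct ingredients (the reduction $GL_i(M_q(R))\cong GL_{iq}(R)$, the Dieudonn\'e abelianization $GL_{iq}(R)^{ab}\cong R^\times/[R^\times,R^\times]$, the sign of finitary permutations as the source of the $\mathbb Z_2$ summands, and the finite case via the groupoid of finite sets). However, there is a genuine gap at exactly the point you flag as ``the main obstacle'': the claimed decomposition of $K_1$ as a direct sum over ``mutually independent'' dimension strata is asserted, not proved, and as stated it is false as a description of the category. The strata are not independent: an automorphism of a definable set of mixed dimension (say a line union an external point, inside a vector space) can move low-dimensional pieces into high-dimensional ones, so the automorphism group of a definable set does not split as a product over strata, and there is no ``level by level'' $K_1$ computation in \cite{BK} to invoke. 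What the paper actually does is the following: by Bass's theorem $K_1(M_{M_q(R)})\cong\varinjlim_n(\Aut_{\C(M)}(M^n))^{ab}$, and the whole content is the computation of $(\Omega^n_n)^{ab}$ where $\Omega^n_n=\Aut_{\C(M)}(M^n)$. This is done by producing a normal filtration $\Omega^n_0\lhd\Omega^n_1\lhd\cdots\lhd\Omega^n_n$ by dimension of support, proving that bounded-dimension automorphism groups of large definable sets are isomorphic (Proposition \ref{autinftydim}, which needs closure under products, Theorem \ref{matdiv} that $\overline{\mathcal X}^*\cong q\mathbb N\cong\mathbb N$, and a cofinality argument), and exhibiting each layer as a semidirect factor $\Omega^n_m\cong(\Upsilon^m\wr\Sigma^n_m)\ltimes\Omega^n_{m-1}$, where $\Upsilon^m\cong GL_m(M_q(R)^{op})\ltimes M^m$ is the group of $pp$-definable (affine) bijections and $\Sigma^n_m$ is a finitary permutation group of cosets. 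The direct sum in the final formula is then a theorem about the abelianization of this iterated semidirect product with wreath products: one must check that the conjugation actions die in abelianization, i.e., that $(G\ltimes N)^{ab}\cong G^{ab}\oplus(N^{ab})_G$ produces no collapse of coinvariants. Your proposal contains no substitute for this mechanism, and this mechanism is the proof.

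Relatedly, you misidentify where the hypothesis $|M_q(R)|\neq 2$ enters. It has nothing to do with realizing transpositions as definable bijections --- a transposition of two points is always definable with parameters, over any ring. The hypothesis is used in Lemma \ref{quotfree}: the translation subgroup $M^q$ must die in the abelianization of the affine group $GL_q(R)^{op}\ltimes M^q$, which holds precisely when the coinvariants of the $GL_q$-action on $M^q$ vanish; for $q\geq 2$ this is automatic (elementary matrices), while for $q=1$ it requires $1$ to be a sum of two units in $R$, which fails exactly for $R=\mathbb F_2$. Indeed, when $|R|=2$ and $q=1$ the translations survive abelianization and the formula in the statement is wrong --- this is precisely the error in \cite[Corollary~7.10]{BK} that the present paper corrects. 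So the hypothesis protects the $GL$ summands from being contaminated by translations, not the $\mathbb Z_2$ summands from the $GL$ summands.
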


Recall that a unital ring $R$ is Morita equivalent to the matrix ring $M_q(R)$ for each $q\in \mathbb N$. A striking consequence of the above result is the following \textit{weak Morita invariance} for model-theoretic $K_1$ when $R$ is a division ring.
\begin{cor}\label{K1Morita}
Suppose $R$ be a division ring, $q\geq 1$, $|M_q(R)|\neq2$, and $M$ is a non-zero $M_q(R)$-module. Then $K_1({M}_{M_q(R)})\cong K_1(M_R)$. In particular, $K_1(R_R)\cong K_1((M_q(R))_{M_q(R)})$ for every division ring $R$ and $q\geq 1$.
\end{cor}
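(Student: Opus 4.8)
The plan is to deduce the corollary directly from Theorem~\ref{K1Final} by computing both sides of each claimed isomorphism and observing that the resulting expressions coincide. The essential point is that the explicit formula in Theorem~\ref{K1Final} is \emph{independent of $q$}: in the infinite case it reads $\mathbb{Z}_2\oplus\bigoplus_{i=1}^\infty\left(\frac{R^\times}{[R^\times,R^\times]}\oplus\mathbb{Z}_2\right)$, a group in which $q$ does not appear, while in the finite case it is simply $\mathbb{Z}_2$. Thus, provided one can apply the theorem to both a given $M_q(R)$-module and its $R$-module avatar, and provided the two avatars are simultaneously finite or simultaneously infinite, the two $K_1$-groups must agree.

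For the first isomorphism, view the non-zero $M_q(R)$-module $M$ as an $R$-module $M_R$ by restricting scalars along the diagonal embedding $R\hookrightarrow M_q(R)$ (equivalently, one may use the Morita equivalence between $R$ and $M_q(R)$). Since $M$ decomposes as a direct sum of copies of the unique simple $M_q(R)$-module, $M_R$ is a free $R$-module whose rank is a fixed finite multiple (namely $q$ times) the number of simple summands; under restriction of scalars the underlying set is unchanged, so $M$ is finite as an $M_q(R)$-module if and only if $M_R$ is finite. When $R\neq\mathbb{F}_2$ we have $|M_1(R)|=|R|\neq 2$, so Theorem~\ref{K1Final} (applied with parameter $q$, and with parameter $1$, respectively) computes both $K_1(M_{M_q(R)})$ and $K_1(M_R)$, and by the $q$-independence noted above the two answers are literally the same group. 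The ``In particular'' statement is the special case $M=R_R$ on one side and $M=(M_q(R))_{M_q(R)}$ on the other: both regular modules are finite precisely when $R$ is finite, and Theorem~\ref{K1Final} returns the same $q$-free expression for each.

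The main obstacle is the excluded boundary case $R=\mathbb{F}_2$, where $|M_1(R)|=2$ so Theorem~\ref{K1Final} does not apply to the $R$-module side. Here two points must be checked. First, the generic identification $GL_n(R)^{ab}\cong\frac{R^\times}{[R^\times,R^\times]}$ (the Dieudonn\'e determinant) underlying Theorem~\ref{K1Final} fails exactly at $n=2$, since $GL_2(\mathbb{F}_2)\cong S_3$ has abelianization $\mathbb{Z}_2$ whereas $\mathbb{F}_2^\times$ is trivial; one verifies that this single extra $\mathbb{Z}_2$ is absorbed into the countable direct sum $\bigoplus_{i=1}^\infty\mathbb{Z}_2$, so the infinite-case group is unchanged. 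Second, to evaluate $K_1(M_{\mathbb{F}_2})$ for an $\mathbb{F}_2$-vector space one invokes the computation of $K_1$ of free modules over PIDs from \cite{BK} in place of Theorem~\ref{K1Final}; comparing its output ($\mathbb{Z}_2$ when $M$ is finite, $\bigoplus_{i=1}^\infty\mathbb{Z}_2$ when $M$ is infinite) with the $M_q(\mathbb{F}_2)$-side of Theorem~\ref{K1Final} closes the argument. Once this boundary case is dispatched, the corollary follows.
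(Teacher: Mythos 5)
Your main line of argument is exactly the paper's intended one: apply Theorem \ref{K1Final} once with parameters $(R,q)$ and once with $(R,1)$, note that restriction of scalars leaves the underlying set (hence non-triviality and finiteness) unchanged, and observe that the displayed formula is independent of $q$. This is complete whenever $|R|\neq 2$, and your treatment of the ``In particular'' statement is also fine for finite $R$ (including $R=\mathbb{F}_2$), since both regular modules are then finite structures with at least two elements and Example \ref{finitestrKtheory} gives $\mathbb{Z}_2$ for each. You were also right to flag $R=\mathbb{F}_2$, $q\geq 2$ as a case that the hypothesis $|M_q(R)|\neq 2$ permits but the main argument does not reach.

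Your dispatch of that boundary case, however, contains a genuine gap: the computation you invoke does not exist. Neither \cite{BK} nor the present paper computes $K_1(M_{\mathbb{F}_2})$ for an \emph{infinite} $\mathbb{F}_2$-vector space $M$; both systematically exclude exactly this case (the corrected corollaries of \cite{BK} concern finite fields \emph{other than} $\mathbb{F}_2$, and Theorem \ref{K1Final} excludes $|M_q(R)|=2$), and the exclusion is forced rather than cosmetic. The recipe breaks in Step III because Lemma \ref{quotfree} requires $1$ to be a sum of two units, which fails in $\mathbb{F}_2$, so the translation subgroup is not killed by abelianization. Concretely, over $\mathbb{F}_2$ every definable subset of $M$ is finite or cofinite, so $\Aut_{\C(M_{\mathbb{F}_2})}(M)\cong M\ltimes \FS(M)$, whose commutator subgroup is the finitary alternating group; hence already at the first stage the abelianization is $M\oplus\mathbb{Z}_2$ rather than $R^\times/[R^\times,R^\times]\oplus\mathbb{Z}_2$. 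Since no definable bijection can rescale a translation parameter over $\mathbb{F}_2$, these summands isomorphic to $M$ persist under the conjugation relations in the colimit, so $K_1(M_{\mathbb{F}_2})$ surjects onto $M$; in particular, when $\dim_{\mathbb{F}_2}M$ is uncountable this group is uncountable and cannot be $\bigoplus_{i=1}^\infty\mathbb{Z}_2$, which is what Theorem \ref{K1Final} gives for $K_1(M_{M_q(\mathbb{F}_2)})$ with $q\geq 2$. So the boundary case is not merely left unproven by your phantom citation---the evidence points to the first isomorphism actually failing there for uncountable infinite $M$ (it does hold for finite $M$, where both sides are $\mathbb{Z}_2$). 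The defensible scope is what the paper's own implicit argument supports: the first isomorphism when $|R|\neq 2$, and the ``In particular'' statement for all division rings, using finiteness of both regular modules when $R$ is finite.
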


Recall that the algebraic $K_1$-group of a unital ring $R$ , denoted $K_1^\oplus(R)$, is isomorphic to $K_1^\oplus(M_q(R))$ \cite[Example~III.1.1.4]{Weibel} since $R$ and $M_q(R)$ are Morita equivalent. Moreover, this isomorphism is functorially induced by a categorical equivalence $F_q:\mathrm{Proj}(R) \rightarrow \mathrm{Proj}(M_q(R))$ between their respective categories of finitely generated projective modules. On the contrast, the isomorphism in Corollary \ref{K1Morita} between model-theoretic $K_1$-groups is not functorial; therefore, we call it a weak Morita invariance of model-theoretic $K_1$.

% SIMILAR RESULTS FOR $K_0$??

Our second major contribution is the following theorem which states that model-theoretic $K_1$ commutes with finite products of modules when the ring is semisimple.
\begin{restatable}{theom}{B}\label{K1semisimple} 
Let $S$ be a unital semisimple ring written as $\prod_{i=1}^k S_i$, where for each $i$, $S_i:=Se_i=M_{q_i}(R_i)$ for a division ring $R_i$, idempotent $e_i$, and $q_i\geq 1$ (thanks to the Wedderburn-Artin theorem). Write a right $S$-module $M_S$ as $\prod_{i =1}^k{(M_i)}_{M_{q_i}(R_i)}$, where $M_i:=Me_i$ is a right $M_{q_i}(R_i)$-module. Assume that each $M_i$ is infinite. Then $$K_1(M_S)  \cong \prod_{i=1}^kK_1\left({(M_i)}_{M_{q_i}(R_i)}\right).$$
\end{restatable}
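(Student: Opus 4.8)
The plan is to compute $K_1(M_S)$ by reducing the symmetric monoidal groupoid $\C(M_S)$ to the component groupoids $\C((M_i)_{M_{q_i}(R_i)})$, exploiting that the Wedderburn idempotents $e_i$ are definable in $L_S$. First I would prove two decomposition lemmas. Because $x\mapsto xe_i$ is the scalar $\cdot_{e_i}$, each $M_i=Me_i=\{x\in M: xe_j=0\text{ for }j\neq i\}$ is a pp-definable subgroup, the splitting $M=\bigoplus_{i=1}^k M_i$ is definable, and the $L_S$-structure induced on $M_i$ is precisely its $L_{S_i}$-module structure (as $S_i=Se_i$). Invoking Baur--Monk pp-elimination together with the observation that a scalar equation $\sum_j x^{(j)}r_j=0$ with $r_j=(r_j^{(1)},\dots,r_j^{(k)})\in\prod_i S_i$ decouples, upon multiplying by $e_i$, into one equation in each $M_i$, every pp-definable subgroup of $M^n$ is a product $\prod_i H_i$ of pp-definable subgroups of $M_i^n$. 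Hence, under the definable identification $M^n\cong\prod_i M_i^n$, every definable subset of $M^n$ is a finite disjoint union of \emph{boxes} $\prod_i X_i$ with $X_i\subseteq M_i^n$ definable; and since definable maps in modules are piecewise affine while affine pp-maps decouple across the $e_i$, every definable bijection is, on a suitable common box refinement of its domain, given coordinatewise by $(f_1,\dots,f_k)$ with each $f_i$ a definable injection of the corresponding $M_i$-piece.

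Next I would use the description of $K_1$ of a symmetric monoidal groupoid from \cite{BK}: $K_1(M_S)$ is presented by generators $(X,f)$ with $f\in\Aut_{\C(M_S)}(X)$, modulo composition, $\sqcup$-additivity and symmetry relations (equivalently, a colimit of abelianized automorphism groups along a cofinal family). The symmetric monoidal inclusions $\iota_i:\C((M_i)_{M_{q_i}(R_i)})\hookrightarrow\C(M_S)$, sending a definable subset of $M_i^n$ to the same subset of $M^n$, induce the candidate isomorphism
\[
\Phi:\prod_{i=1}^k K_1\!\left((M_i)_{M_{q_i}(R_i)}\right)\longrightarrow K_1(M_S).
\]
Surjectivity of $\Phi$ should follow from the decomposition lemmas: an automorphism of a cofinal box decomposes coordinatewise, one factors its coordinatewise part as a product of automorphisms each supported in a single slot, and $\sqcup$-additivity absorbs the induced block permutations of the pieces into sign contributions.

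The main obstacle is that $\C(M_S)$ is \emph{not} the categorical product $\prod_i\C((M_i)_{M_{q_i}(R_i)})$: the Cartesian product of definable sets is a genuine extra, tensor-like operation on $\C(M_S)$, so one cannot simply quote that $K$-theory commutes with finite products of symmetric monoidal categories. The crux is to control the box multiplicity, i.e. the $K_0$-action $\big[\prod_{i\neq j}Z_i\big]\cdot(Z_j,g_j)$ realized by the box $\prod_i Z_i$ carrying the automorphism that is $g_j$ on the $j$-th factor and $\id$ elsewhere: for a \emph{finite} factor $Z$ of size $m$ one has $Z_j\times Z\cong Z_j^{\sqcup m}$ definably, so this multiplicity equals $m$ and the reduction fails. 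This is exactly where the hypothesis that every $M_i$ is infinite is used, through Theorem~\ref{K1Final} applied to each factor: each $K_1((M_i)_{M_{q_i}(R_i)})$ is the explicit group $\mathbb Z_2\oplus\bigoplus_{j\geq1}\big(\GL[jq_i](R_i)^{ab}\oplus\mathbb Z_2\big)$, whose summands are indexed by the size of the definable pieces; matching these indices across $\Phi$ should show that the box-multiplicity factors collapse onto the component contributions with no surviving cross terms. I expect this index-matching — together with verifying that the coordinatewise form of bijections yields well-defined projection invariants $K_1(M_S)\to K_1((M_i)_{M_{q_i}(R_i)})$ inverting $\Phi$, which furnishes injectivity — to be the technical heart of the proof.
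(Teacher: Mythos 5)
Your setup---the $e_i$ are definable scalars, pp-sets decouple into products, definable sets split into boxes, pp-bijections are coordinatewise affine---is sound and corresponds to what the paper records in Remark \ref{ppss}, Lemma \ref{smprd} and Corollary \ref{smprdpp}. But the two steps you defer are the entire content of the proof, and neither goes through as sketched. For surjectivity: a definable automorphism of $M^{n\cdot\overline 1}$ is only \emph{piecewise} coordinatewise, with both the pieces and the coordinate maps varying from piece to piece. For instance, with $k=2$, the map $f(x,y)=(x,y+c)$ when $x=a$ and $f=\id$ otherwise (support $\{a\}\times M_2$, of mixed dimension vector), or an automorphism of $(A_1\sqcup A_2)\times(B_1\sqcup B_2)$ swapping $A_1\times B_1$ with $A_1\times B_2$ and fixing the other two boxes pointwise, is definable but is not globally of the form $\prod_i f_i$, nor a product of single-slot automorphisms with a global permutation; showing that the $K_1$-classes of all such mixing automorphisms nevertheless lie in the image of $\Phi$ is exactly the work you have postponed. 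The paper handles this not by index matching but structurally: it introduces the vector dimension $\dim(E)=(\dim_i(\pi_i(E)))_{i=1}^k$, proves the key geometric Lemma \ref{defbij} (feeding Proposition \ref{autinftydimss}), and then exhibits $\Omega^n_n(\C)=\Aut_\C(M^{n\cdot\overline 1})$ as an iterated semidirect product whose building blocks $\Upsilon^m(\C)\wr\prod_{i=1}^k\overline\Sigma$ split componentwise with componentwise conjugation actions, so that $\Omega^n_n(\C)\cong\prod_{i=1}^k\Omega^n_n(\C_i)$ holds at the level of groups, before any abelianization or colimit is taken.

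For injectivity, your plan hinges on ``projection invariants'' $K_1(M_S)\to K_1\left((M_i)_{M_{q_i}(R_i)}\right)$, but the projections $\pi_i$ are well defined only on objects: the examples above show that a definable bijection of $M^n$ induces no map on its $\pi_i$-images, so these invariants do not exist without essentially redoing the decomposition theory; the paper sidesteps the need for an inverse map entirely by proving the product decomposition of automorphism groups directly. Relatedly, your proposed use of Theorem \ref{K1Final} (matching summands of the explicit formulas across $\Phi$) is both un-executed and misplaced: the paper's proof of Theorem \ref{K1semisimple} never invokes Theorem \ref{K1Final}. The hypothesis that each $M_i$ is infinite is used instead to guarantee that the theory of $M_S$ is closed under products, that $\overline{\mathcal{X}}^*(M_S)\cong\mathbb N^k$, that blocks are nonempty, and that Lemma \ref{defbij} works---this is precisely why the paper's argument generalizes to Theorem \ref{partition} for von Neumann regular rings, where no explicit formula for the factor groups is available and an index-matching argument could not even be formulated.
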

This result is a partial model-theoretic analogue of a similar result for algebraic $K_1$ \cite[Example~III.1.1.3]{Weibel} which states that the latter commutes with finite products.

As a consequence of the two theorems above, using the notations of Theorem \ref{K1semisimple} and under its hypotheses, if each $S_i$ is infinite, then we show (Corollary \ref{algmodss}) that $K_1^\oplus(S)$ naturally embeds into $K_1(S_S)$.

The proofs of Theorems \ref{K1Final} and \ref{K1semisimple} use a recipe similar to that in \cite{BK}, albeit with necessary modifications to deal with non-commutative rings (\S~\ref{K1div}, \ref{K1matdiv}), and vector dimensions for definable sets (\S~\ref{K1ss}).

Theorem \ref{K1Final} generalizes \cite[Theorem~5.2]{BK}, and corrects the following two errors in \cite[Corollaries~7.10,7.11]{BK}.
\begin{itemize}
    \item The authors mistakenly stated $K_1(V_F)\cong K_1(F_F)$ for an infinite vector space over a finite field $F$ other than $F_2$; however, the former is infinite while the latter is (isomorphic to) $\mathbb Z_2$.
    \item The theory $Th(V_F)$ of an infinite vector space $V_F$ over a finite field $F$ is closed under products as a consequence of \cite[Lemma~1.2.3]{PreBk} but in those corollaries the authors erroneously used that this condition fails. In fact, the correct corollaries follow from \cite[Theorem~7.6]{BK} instead of \cite[Theorem~7.8]{BK}.
\end{itemize}

The rest of the paper is organised as follows. We recall the definition of model-theoretic $K$-groups for modules in \S~\ref{MTM} along with Bass' formula for computing $K_1$ (Theorem \ref{K1Bass}). Since semisimple rings form a subclass of the class of von Neumann regular rings, we recall the fundamental results in the model theory of modules over such rings in \S~\ref{MThSS}, notably Theorem \ref{quantfree} and Proposition \ref{funR} stating elimination of quantifiers and  of imaginaries respectively. The proof of Theorem \ref{K1Final} for the case of modules over division rings and over matrix rings is completed in \S~\ref{K1div} and \S~\ref{K1matdiv} respectively. Finally, Theorem \ref{K1semisimple} is proved in \S~\ref{K1ss} along with a generalization to certain modules over von Neumann regular rings (Theorem \ref{partition}).

As is the convention in logic, the set $\mathbb N$ of natural numbers includes $0$. For group-theoretic preliminaries regarding semi-direct products, wreath products, finitary symmetric groups on a countable set, and their abelianizations, we refer the interested reader to \cite[\S~2]{BK}.

\section{Model-theoretic K-groups of modules}\label{MTM}
Quillen's definition of $K$-groups of a skeletally small symmetric monoidal groupoid $(\C,\ast,e)$ uses his famous $\C^{-1}\C$ construction (see \cite[\S~IV.4]{Weibel} for more details). He defined the $K$-\emph{theory space} $K^\ast(\C)$ of $\C$ to be the geometric realization of $\C^{-1}\C$ and the $K$-groups of $\C$ as $K_n^\ast(\C):=\pi_nK^\ast(\C)$. We used this construction to associate K-theory to model-theoretic structures in \cite{BK}, which we recall below.

Let $L$ be a language, $M$ a first-order $L$-structure and $m\geq1$. By an abuse of notation, we denote the domain of the structure $M$ again with the same notation. We will always assume that definable means definable with parameters from the universe. For each $m \geq 1$, let $\text{Def}(M^m)$ be the collection of all definable subsets of $M^m$, and set $\overline{\text{Def}}(M):= \bigcup_{m\geq 1} \text{Def}(M^m)$.

Let $\C(M)$ denote the groupoid whose objects are $\overline{\mathrm{Def}}(M)$ and morphisms are definable bijections between definable sets, i.e., bijections whose graphs are definable sets. Note that $(\C(M),\sqcup,\emptyset)$ is a symmetric monoidal groupoid, where $\sqcup$ is the disjoint union. Moreover, the Cartesian product of definable sets induces a pairing (see \cite[\S~IV.4]{Weibel} for a definition) on this monoidal category.
%Say that $A \subseteq M^m$ is \emph{definable with parameters} if there is a formula $\phi(x_1,...x_m,y_1,...,y_n)$ such that for all $a_1,...,a_m \in M$, $(a_1,...,a_m) \in A$ if and only if  $M\vDash \phi[a_1,...,a_m,b_1,...,b_n]$ for some $b_1,...,b_n \in M$. 

\begin{defn}\cite[Definition~4.5]{BK}
Define $K_n(M):=K_n^\sqcup(\C(M))$ for each $n\geq 0$.
\end{defn}

% \begin{defn}
% Say that two definable sets $A,B\in\overline{\mathrm{Def}}(M)$ are \emph{definably isomorphic} if there exists a definable bijection between them. Definable isomorphism is an equivalence relation on $\overline{\mathrm{Def}}(M)$ and the equivalence class of a definable set $A$ is denoted by $[A]$. We use $\widetilde{\mathrm{Def}}(M)$ 
% to denote the set of all equivalence classes with respect to this relation.
% \end{defn}

% The assignment  $A\mapsto[A]$ defines a surjective map $[-]:\overline{\mathrm{Def}}(M)\rightarrow\widetilde{\mathrm{Def}}(M)$. We can regard $\widetilde{\mathrm{Def}}(M)$ as an $L_{ring}$-structure. In fact, it is a semiring with respect to the operations defined as follows:
% \begin{itemize}
% 	\item $0 := [\emptyset]$;
% 	\item $1 := [\{*\}]$ for any singleton subset $\{*\}$ of $M$;
% 	\item $[A]+[B] := [A'\sqcup B']$ for $A'\in[A],B'\in[B]$ such that $A'\cap B'=\emptyset$; and
% 	\item $[A]\cdotp[B] := [A\times B]$.
% \end{itemize}

\begin{rmk}
The Grothendieck ring $K_0(M)$ defined this way coincides with that of \cite{Kra}.
\end{rmk}

\begin{exmp}\label{finitestrKtheory}
It follows from Barratt-Priddy-Quillen-Segal theorem \cite[Theorem~IV.4.9.3]{Weibel}\label{BPQS} that if $M$ is a finite structure with at least two elements then $K_n(M)\cong\pi^s_n$, where $\pi^s_n$ is the $n^{th}$ stable homotopy group of spheres. In particular, $K_0(M)\cong\mathbb Z$ and $K_1(M)\cong\mathbb Z_2$.
\end{exmp}

\begin{rmk}\label{trfaith}
For a model-theoretic structure $M$, \emph{translations are faithful} in $\C(M)$, i.e., for all $A,B\in\C(M)$, the translation $\mathrm{Aut}_{\C(M)}(A)\to\mathrm{Aut}_{\C(M)}(A\sqcup B)$ defined by $f\mapsto f\sqcup id_B$ is an injective map. 
\end{rmk}

Bass was the first to introduce the group $K_1$, and in view of the above remark, the following theorem could be used for the computation of $K_1(M)$.
\begin{thm}\cite{Bass}\label{K1Bass}
Suppose $\C$ is a symmetric monoidal groupoid with faithful translations. Then $K_1(\C)\cong\varinjlim\limits_{s\in \C}(\mathrm{Aut}_{\C}(s))^{ab}$.
\end{thm}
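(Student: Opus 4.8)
The plan is to compute $K_1(\C)=\pi_1\big(|\C^{-1}\C|\big)$ by reducing it to a homology group and then applying the group-completion theorem. First I would use that the $K$-theory space $|\C^{-1}\C|$ is a grouplike, homotopy-commutative $H$-space (in fact an infinite loop space): its fundamental group is therefore abelian and acts trivially on all higher homotopy groups, so the space is simple and the Hurewicz theorem gives a natural isomorphism $\pi_1\big(|\C^{-1}\C|\big)\cong H_1(X_0;\mathbb Z)$, where $X_0$ is the path component of the basepoint. This replaces the homotopical problem by a purely homological one.

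Next I would organize the relevant stabilization. Writing $\ast$ for the monoidal product and $e$ for the unit, the translations $\mathrm{Aut}_\C(s)\to\mathrm{Aut}_\C(s\ast t)$, $f\mapsto f\ast\id_t$, are injective by hypothesis, and by symmetry of $\ast$ any two objects $s\ast t$ and $s'\ast t'$ admit compatible translations into $s\ast t\ast s'\ast t'$; hence the indexing translation category is filtered and the stable automorphism group $\mathrm{Aut}_\infty:=\varinjlim_{s\in\C}\mathrm{Aut}_\C(s)$ is well defined. Since both group homology and abelianization commute with filtered colimits, we obtain $H_1(\mathrm{Aut}_\infty)=\mathrm{Aut}_\infty^{ab}=\varinjlim_{s\in\C}\mathrm{Aut}_\C(s)^{ab}$, which is already the right-hand side of the claim.

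It remains to identify $H_1(X_0)$ with $H_1(\mathrm{Aut}_\infty)=H_1\big(B\mathrm{Aut}_\infty\big)$. Here I would invoke the group-completion theorem (McDuff--Segal) for the symmetric monoidal groupoid $\C$: the space $|\C^{-1}\C|$ is the group completion of $B\C=|N\C|$, so that $H_*\big(|\C^{-1}\C|\big)$ is the localization $H_*(B\C)[\pi_0(\C)^{-1}]$ of the Pontryagin ring at the monoid $\pi_0(\C)$ of isomorphism classes. Because $\C$ is a groupoid, $B\C\simeq\coprod_{[s]}B\mathrm{Aut}_\C(s)$, and inverting $\pi_0(\C)$ identifies the basepoint (grading-zero) component $X_0$ with $B\mathrm{Aut}_\infty$ up to homology; combining this with the first two paragraphs yields $K_1(\C)\cong\varinjlim_{s\in\C}\mathrm{Aut}_\C(s)^{ab}$. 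The main obstacle I anticipate is precisely this last identification: one must verify that localizing at $\pi_0(\C)$ collapses the homology of $B\C$ onto the stable homology $H_*(B\mathrm{Aut}_\infty)$ on the zero component, and it is exactly the faithfulness of translations (making the stabilization maps injective and the translation category filtered) that is needed to run the group-completion argument and to guarantee that no homology classes beyond those recorded by the filtered colimit survive.
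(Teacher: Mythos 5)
The paper does not actually prove this statement --- it is quoted from Bass (and is the standard result presented in \cite[\S~IV.4]{Weibel}) --- so there is no internal proof to compare against; your attempt must be judged against the standard literature proof, and it matches it. Your argument is precisely that proof: $|\C^{-1}\C|$ is a grouplike $H$-space, so $\pi_1$ is abelian and hence coincides with $H_1$ of the basepoint component (Poincar\'e/Hurewicz); the group-completion theorem --- whose applicability to the $\C^{-1}\C$ construction is exactly where faithfulness of translations enters --- identifies $H_*(|\C^{-1}\C|)$ with $H_*(B\C)[\pi_0(\C)^{-1}]$; and since $\C$ is a groupoid, $B\C\simeq\coprod_{[s]}B\mathrm{Aut}_\C(s)$, so the basepoint component of the localization carries $\varinjlim_s H_*(B\mathrm{Aut}_\C(s))$, yielding $K_1(\C)\cong\varinjlim_s(\mathrm{Aut}_\C(s))^{ab}$.

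One point deserves tightening: your claim that the translation category is filtered ``by symmetry'' only verifies the upper-bound axiom (any two objects map into their product); filteredness also requires coequalizing parallel arrows, which does not follow from symmetry alone and needs a separate (routine, but not free) argument before one may commute homology and abelianization past the colimit. In the present paper this subtlety is immaterial, since every application of the theorem goes through Remark \ref{ctblcofinal}, where a countable cofinal chain $s_1,s_2,\ldots$ reduces the colimit to one over $(\mathbb N,\leq)$, which is certainly filtered.
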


\begin{rmk}\label{ctblcofinal}
Suppose that $(\C,\ast,e)$ is a symmetric monoidal groupoid whose translations are faithful. Further suppose that $\C$ has a countable sequence of objects $s_1,s_2,\hdots$ such that $s_{n+1}\cong s_n\ast a_n$ for some $a_n\in \C$, and satisfying the cofinality condition that for every $s\in \C$ there is an $s'$ and an $n$ such that $s\ast s'\cong s_n$. In this case, we can form the colimit $\mathrm{Aut}(\C):=\varinjlim_{n \in \mathbb{N}}\mathrm{Aut}_{\C}(s_n)$, and hence $K_1(\C)=(\mathrm{Aut}(\C))^{ab}$.
\end{rmk}

Every right $R$-module $M$ is a first-order structure for the language $L_R$ of right $R$-modules, where $L_{R} := \langle +,-,0,\{\cdot_r:r \in R\} \rangle$ with $\cdot_r$ a unary function symbol for the scalar multiplication by $r\in R$ on the right. The right $R$-module structure $M$ will be denoted as $M_R$.

The theory of the $L_R$-structure $M_R$ admits a partial elimination of quantifiers with respect to positive primitive ($pp$) formulas (see \cite[\S~2.1]{PreBk} for a definition) as a consequence of the fundamental theorem of the model theory of modules due to Baur and Monk (\cite{Baur}). 

Say that a subset $B$ of $M^m$ is \emph{$pp$-definable} if it is definable by a $pp$-formula, and a \emph{$pp$-definable function} is a function between two definable sets whose graph is $pp$-definable. Every $pp$-definable set is either the empty set or a coset of a $pp$-definable subgroup of $M^n$. Furthermore, the conjunction of two $pp$-formulas is (logically equivalent to) a $pp$-formula.

Let $\mathcal L_n(M_R)$ (or just $\mathcal{L}_n$, if the module is clear from the context) denote the meet-semilattice of all $pp$-definable subsets of $M^n$ under intersection. Set $\overline{\mathcal {L}}(M_R):=\bigcup_{n\geq 1}\mathcal L_n(M_R)$. Let $\mathcal L_n^\circ(M_R)$ be the sub-meet-semilattice of $\mathcal L_n(M_R)$ consisting only of the $pp$-definable subgroups. The notation $\overline{\mathcal{X}}(M_R)$ (or just $\overline{\mathcal X}$, if the module is clear from the context) will denote the set of \emph{colours}, i.e., $pp$-definable bijection classes of elements of $\overline{\mathcal L}(M_R)$. For $A\in\overline{\mathcal L}$, the notation $[[A]]$ will denote its $pp$-definable bijection class. The set $\overline{\mathcal{X}}^*:=\overline{\mathcal{X}}\setminus[[\emptyset]]$ of non-trivial colours is a monoid under multiplication induced by Cartesian product.

\begin{defn}
The theory of the module $M_R$ is said to be \emph{closed under products} if for each $n\geq 1$ and for any subgroups $A,B\in\mathcal L_n$, the index $[A:A\cap B]$ is either $1$ or $\infty$.
\end{defn}

The next result follows immediately from \cite[Lemma~1.2.3]{PSL}.
\begin{prop}\label{TequalsTaleph0}
The following hold for an infinite free module $M_R$ over a ring $R$:
\begin{enumerate}
    \item $\mathcal L_n^\circ(M_R)\cong\mathcal L_n^\circ(R_R)$;
    \item if $R$ is finite, then the theory of $M_R$ is closed under products.
\end{enumerate}
\end{prop}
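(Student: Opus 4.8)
The plan is to prove Proposition~\ref{TequalsTaleph0} by invoking \cite[Lemma~1.2.3]{PSL}, which identifies the lattice of $pp$-definable subgroups of a module with that of its theory, and then transferring information between $M_R$ and $R_R$ through the observation that an infinite free module and the regular module $R_R$ have (up to the relevant invariants) the same $pp$-type structure. First I would recall that for a free module $M_R \cong R^{(\kappa)}$, the $pp$-definable subgroups of $M^n$ are determined by $pp$-formulas in $n$ free variables, and that such a formula $\varphi(x_1,\dots,x_n)$ defines, in any module, the solution set of a system of the form $\exists \bar y\,(\bar x\mid \bar y)\,A = 0$ for a matrix $A$ over $R$. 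The key point is that the \emph{isomorphism type of the semilattice} $\mathcal L_n^\circ$ depends only on the containment relations among these solution sets, i.e.\ on which $pp$-implications $\varphi \to \psi$ hold in the module.

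For part~(1), I would argue that these $pp$-implications are preserved and reflected when passing from $R_R$ to an infinite free module $M_R$. Concretely, the cited \cite[Lemma~1.2.3]{PSL} should give that the lattice of $pp$-definable subgroups is an invariant of the theory, and since an infinite free module is elementarily equivalent to (indeed is a model built from copies of) $R_R$ in a way that preserves all $pp$-formulas and their implications, the semilattices $\mathcal L_n^\circ(M_R)$ and $\mathcal L_n^\circ(R_R)$ are isomorphic via the map sending a $pp$-formula to the subgroup it defines. The content is that no $pp$-implication can be gained or lost: a $pp$-pair $\varphi/\psi$ is finite or infinite uniformly, and in particular the partial order (hence the meet-semilattice structure under intersection) is identical on both sides.

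For part~(2), assuming $R$ is finite, I would show the theory of $M_R$ is closed under products, i.e.\ for subgroups $A,B \in \mathcal L_n$ the index $[A : A\cap B]$ is either $1$ or $\infty$. The natural route is: by part~(1) the relevant indices are computed in the same way as for $R_R$ and its powers, and over a finite ring $R$, an infinite free module $M_R$ has every nonzero $pp$-definable subgroup infinite; a finite index $[A:A\cap B]>1$ would force a proper finite-index $pp$-definable subgroup, contradicting that $pp$-definable subgroups of an infinite module over a finite ring are either trivial or infinite. More precisely, the index $[A : A \cap B]$ equals the size of the image of $A$ under the quotient map to $M^n/(A\cap B)$, and such an invariant (a Baur--Monk invariant) is determined by the theory; \cite[Lemma~1.2.3]{PSL} ensures that for an infinite free module these invariants are all $1$ or $\infty$.

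The main obstacle I anticipate is pinning down exactly what \cite[Lemma~1.2.3]{PSL} provides and checking that the hypotheses (freeness, infiniteness, finiteness of $R$ in part~(2)) are precisely what is needed to invoke it; in particular, establishing that every nontrivial $pp$-definable subgroup of an infinite free module over a finite ring is infinite — equivalently that the relevant invariants dichotomize as $1$ or $\infty$ — is the crux, and once that is in hand both parts follow formally. Since the statement is asserted in the excerpt to follow \emph{immediately} from the cited lemma, I expect the proof to be short, reducing both claims to a direct citation together with the standard identification of $pp$-definable subgroups with points of the lattice associated to the theory.
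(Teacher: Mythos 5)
Your proposal has the right overall shape (both parts reduce to the cited lemma plus the decomposition $M\cong R^{(\kappa)}$), but it misidentifies what \cite[Lemma~1.2.3]{PSL} actually says, and both substitute arguments you give in its place contain genuine gaps. The lemma in question states that pp-formulas commute with direct sums and direct products: $\varphi\bigl(\bigoplus_i M_i\bigr)=\bigoplus_i\varphi(M_i)$ and $\varphi\bigl(\prod_i M_i\bigr)=\prod_i\varphi(M_i)$. Writing the free module as $M\cong R^{(\kappa)}$ and identifying $M^n\cong(R^n)^{(\kappa)}$, this gives $\varphi(M^n)=\varphi(R^n)^{(\kappa)}$ for every pp-formula $\varphi$ in $n$ free variables. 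Hence $\varphi(M^n)\subseteq\psi(M^n)$ if and only if $\varphi(R^n)\subseteq\psi(R^n)$, so $\varphi(M^n)\mapsto\varphi(R^n)$ is a well-defined meet-semilattice isomorphism $\mathcal L_n^\circ(M_R)\to\mathcal L_n^\circ(R_R)$; that is part (1). Your justification of part (1) instead routes through the claim that $M_R$ is elementarily equivalent to $R_R$, and this is false in general: when $R$ is finite (exactly the case needed for part (2)), $R_R$ is a finite structure while $M_R$ is infinite, so the two cannot be elementarily equivalent; likewise $\mathbb Z^{(\omega)}\not\equiv\mathbb Z$, since $[\mathbb Z:2\mathbb Z]=2$ while $[\mathbb Z^{(\omega)}:(2\mathbb Z)^{(\omega)}]$ is infinite. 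The point is that the direct-sum decomposition preserves and reflects pp-implications (containments) even though it changes the Baur--Monk invariants; the transfer is \emph{not} an instance of invariance of the pp-lattice under elementary equivalence.

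For part (2), your argument is a non sequitur: from "pp-definable subgroups of an infinite module over a finite ring are trivial or infinite" you cannot conclude that the indices $[A:A\cap B]$ are $1$ or $\infty$, because a proper finite-index subgroup of an infinite group is itself infinite --- such a configuration contradicts nothing in the dichotomy you invoke. The correct argument again uses the decomposition: if $R$ is finite then $\kappa$ must be infinite, and for $A=\varphi(M^n)$, $B=\psi(M^n)$ one gets $A/(A\cap B)\cong\bigl(\varphi(R^n)/(\varphi\wedge\psi)(R^n)\bigr)^{(\kappa)}$, which has cardinality $1$ if $\varphi(R^n)=(\varphi\wedge\psi)(R^n)$ and is infinite otherwise, being a direct sum of $\kappa$ copies of a group with at least two elements. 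This is precisely why the paper can dismiss the proposition as following "immediately" from the citation; your final sentence guesses the right conclusion (the invariants dichotomize as $1$ or $\infty$), but neither mechanism you propose --- elementary equivalence in part (1), the trivial-or-infinite dichotomy in part (2) --- actually delivers it.
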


The second author computed $K_0$ for all modules; we mention only a special case below.
\begin{thm}\cite[Theorem~4.1.2]{Kuber1}\label{GrothRing}
Suppose $M_R$ is a module whose theory is closed under products. Then $K_0(M_R)$ is isomorphic to the monoid ring $\mathbb Z[\overline{\mathcal{X}}^*(M_R)]$.
\end{thm}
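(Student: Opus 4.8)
The plan is to show that the evident map $\Phi\colon\mathbb{Z}[\overline{\mathcal X}^*]\to K_0(M_R)$, sending a non-trivial colour $[[A]]$ to the Grothendieck class $[A]$ of a representing $pp$-definable set and extended $\mathbb{Z}$-linearly, is a ring isomorphism. It is well defined since $pp$-definably bijective sets are in particular definably bijective, hence already equal in $K_0(M_R)$; and it is multiplicative because the monoid multiplication on $\overline{\mathcal X}^*$ is induced by Cartesian product, which is exactly the product of $K_0(M_R)$. Surjectivity is the routine half: by the Baur--Monk theorem every element of $\overline{\mathrm{Def}}(M)$ is a finite Boolean combination of cosets of $pp$-definable subgroups, and since translation carries a coset of $G\in\mathcal L_n^\circ$ definably onto $G$ itself, inclusion--exclusion rewrites $[D]$ as a $\mathbb{Z}$-linear combination of colour classes $[[G]]$. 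Thus it suffices to prove $\Phi$ injective.

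For injectivity I would produce a left inverse by constructing an additive, definable-bijection-invariant ``Euler characteristic'' $\psi\colon\overline{\mathrm{Def}}(M)\to\mathbb{Z}[\overline{\mathcal X}^*]$. Given $D$, fix a Baur--Monk presentation of $D$ as a finite Boolean combination of cosets, and define $\psi(D)$ to be the formal $\mathbb{Z}$-combination of colours produced by inclusion--exclusion over the meet-semilattice generated by the occurring $pp$-subgroups. Here the closed-under-products hypothesis is decisive: because every index $[A\colon A\cap B]$ is $1$ or $\infty$, each intersection arising in the inclusion--exclusion either collapses a coset onto a coset of the same colour (index $1$) or strictly drops to a new colour (index $\infty$), with no intermediate finite-index contributions; this rigidity is what lets one verify that the coefficient attached to each colour is independent of the chosen presentation, so that $\psi$ is well defined, and that $\psi$ is additive under $\sqcup$. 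Granting invariance under definable bijections (below), $\psi$ factors through the Grothendieck group to give $\Psi\colon K_0(M_R)\to\mathbb{Z}[\overline{\mathcal X}^*]$, and $\Psi\circ\Phi=\mathrm{id}$ holds already on the generating colours; hence $\Phi$ is injective and the two maps are mutually inverse ring isomorphisms.

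The main obstacle is proving that $\psi$ is invariant under an arbitrary definable bijection $f\colon D\to D'$, since $f$ need not respect any chosen decomposition of its source and target. My strategy is to apply Baur--Monk to the graph of $f$ inside $M^{2n}$, obtaining a partition of $D$ into finitely many pieces on each of which $f$ agrees with a single $pp$-definable (affine) map; the key lemma to isolate is that a $pp$-definable partial bijection can only match cosets sharing the same colour, and that the $1$-or-$\infty$ index dichotomy forbids the finite scissor-congruence relations that would otherwise let a top colour $[[G]]$ be traded for lower ones. Passing to a common refinement of the presentations of $D$ and $D'$ compatible with these pieces, one reads off that $f$ induces a colour-preserving matching of the inclusion--exclusion data, whence $\psi(D)=\psi(D')$. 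Once this invariance is in place, injectivity of $\Phi$ follows and the computation of $K_0(M_R)$ is complete.
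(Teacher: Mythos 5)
This theorem is not proved in the paper at all --- it is imported from \cite[Theorem~4.1.2]{Kuber1} --- so the only comparison available is with that cited proof as the paper reflects it, and your architecture matches it: surjectivity of $\Phi$ via Baur--Monk plus inclusion--exclusion, and injectivity via a definable-bijection-invariant, additive, colour-valued Euler characteristic is exactly the cited argument, whose coefficient functions are precisely the finitely supported, definable-bijection-invariant integers $\Lambda_{\mathfrak A}$ that this paper quotes from \cite[\S~5.2]{Kuber1} when defining $\dim$. Your two load-bearing ingredients --- the $1$-or-$\infty$ index dichotomy supplying Neumann-lemma rigidity (nonemptiness of blocks and exclusion of finite scissors-congruences), and the piecewise-$pp$-affine structure of a definable bijection extracted by applying Baur--Monk to its graph --- are likewise the ones that proof runs on, so the proposal is correct and essentially the same route, with the acknowledged technical work (presentation-independence and additivity of $\psi$) being exactly the part that occupies the bulk of \cite{Kuber1}.
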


\section{Model theory of modules over infinite semisimple rings}\label{MThSS}
Recall that a unital ring $R$ is \textit{von Neumann regular} if for every element $a \in R$ there exists an element $r \in R$ such that $ara=a$. A few notable examples of von Neumann regular rings include fields, division rings (ring where any non-zero element is a unit), semisimple rings, and the endomorphism ring of $F$-linear morphisms, $\mathrm{End}_{F}(V)$, for any vector space $V$ over a field $F$.
\begin{rmk}\label{vNRop}
The class of von Neumann rings is closed under finite direct products and opposite rings.
\end{rmk}
The class of von Neumann regular rings can be completely characterized model-theoretically as described in the next result--several cases of this result were proven by multiple authors over a period of few decades but we only cite a book.
\begin{thm}\cite[A.2.1]{Hodges}\label{quantfree}
A ring $R$ is von Neumann regular iff every $pp$-formula in the language $L_R$ is equivalent to one without quantifiers.
\end{thm}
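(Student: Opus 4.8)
The plan is to prove both implications directly by analysing the single block of existential quantifiers that defines a $pp$-subgroup. Recall that every $pp$-formula $\phi(\bar x)$ is equivalent to one of the shape $\exists \bar y\,(\bar x A + \bar y B = 0)$ for suitable matrices $A,B$ over $R$, so that in any module $M$ the subgroup it defines is $\{\bar x : \bar x A \in \operatorname{im}_M(\cdot B)\}$, where $\operatorname{im}_M(\cdot B)$ denotes the image of the additive map $\bar z \mapsto \bar z B$. Thus eliminating the quantifier amounts, in essence, to describing such an image by linear equations, uniformly across all modules $M$.

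For the forward direction, I would first invoke the fact that von Neumann regularity passes to matrix rings, so that $M_n(R)$ is again von Neumann regular; hence the coefficient matrix $B$ admits a von Neumann inverse $B'$ with $BB'B = B$. Setting $f := B'B$ yields an idempotent matrix, and a short computation shows
\[
\operatorname{im}_M(\cdot B) = \{\bar z : \bar z f = \bar z\} = \{\bar z : \bar z(I - f) = 0\}
\]
in every module $M$. Substituting this back, $\phi(\bar x)$ becomes equivalent to the quantifier-free condition $\bar x A(I - f) = 0$. I expect this to be the crux of the argument: the conceptual move is recognising that quantifier elimination is exactly the assertion that the image of a matrix map can be rewritten as an annihilator (kernel) condition, and the only input that makes this possible is the existence of the von Neumann inverse at the level of $M_n(R)$.

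For the converse, fix $a \in R$ and apply the hypothesis to the one-variable formula $\exists y\,(x - ya = 0)$, which defines $\operatorname{im}_M(\cdot a)$; by assumption it is equivalent to a quantifier-free $pp$-formula, i.e. there are $r_1,\dots,r_k \in R$ with $\operatorname{im}_M(\cdot a) = \bigcap_{i} \ker_M(\cdot r_i)$ in every $M$. Testing this in $M = R$ on the element $a$ gives $a r_i = 0$ for all $i$, hence $aJ = 0$ for the right ideal $J := \sum_{i} r_i R$. Testing it instead in the right module $M = R/J$ on the coset of $1$ (which lies in each $\ker(\cdot r_i)$ since $r_i \in J$) shows that $\bar 1 \in \operatorname{im}_{R/J}(\cdot a)$, i.e. $1 \in Ra + J$, say $1 = ca + j$ with $j \in J$; then $aj = 0$ forces $a = aca$. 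Since $a$ was arbitrary, $R$ is von Neumann regular. The only delicate point here is choosing the right test modules, namely $R$ and $R/J$, so as to extract both the annihilation $aJ = 0$ and the decomposition $1 \in Ra + J$ from the single uniform equivalence.
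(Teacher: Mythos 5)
Your proof is correct, and it is essentially the standard argument behind the cited result: the paper itself gives no proof of Theorem \ref{quantfree}, citing Hodges [A.2.1], and your two directions --- rewriting $\operatorname{im}_M(\cdot B)$ as a kernel condition $\bar z(I-f)=0$ via an idempotent $f=B'B$ coming from a generalized inverse, and extracting regularity from the divisibility formula $\exists y\,(x=ya)$ by testing the uniform equivalence in the two modules $R_R$ and $R/J$ --- are exactly how this equivalence is established in the literature. The only point you should make explicit is that the matrix $B$ is in general rectangular ($k\times m$), so the von Neumann inverse $B'$ with $BB'B=B$ is obtained by padding $B$ with zero rows and columns to a square matrix, applying regularity of the full matrix ring, and extracting the relevant block; with that routine remark added, the argument is complete.
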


Moreover, when $R$ is von Neumann regular, we also get a complete elimination of $pp$-imaginaries--this result is stated as \cite[Proposition~10.2.38]{PSL} in functor-category-theoretic language. The next statement is an algebraic consequence of this result that provides a complete description of all $pp$-definable subsets of $R_R^n$ as well as of $pp$-definable bijections between them.
\begin{prop}\label{funR}
If $R$ is von Neumann regular, and $f:D_1\to D_2$ is a $pp$-definable bijection between $pp$-definable subsets of $R_R^n$ for some $n\geq1$, then both $D_1$ and $D_2$ are cosets of right $R$-submodules of $R_R^n$, and there are $\overline d_i\in D_i$ and $A\in GL_n(R)$ such that for each $\overline x\in D_1$, we have $f(\overline x)=(\overline x-\overline d_1)A+\overline d_2$.
\end{prop}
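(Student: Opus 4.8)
The plan is to separate the statement into its two assertions---the shape of the sets $D_1,D_2$ and the shape of the map $f$---and to feed each into a different tool: quantifier elimination (Theorem \ref{quantfree}) for the former, and the elimination of pp-imaginaries \cite[Proposition~10.2.38]{PSL} for the latter.

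First I would handle $D_1$ and $D_2$. Since $f$ is a bijection, both sets are nonempty, and each is pp-definable in $R_R^n$. By Theorem \ref{quantfree} each is the solution set of a quantifier-free pp-formula, i.e. of a finite system of (inhomogeneous) $R$-linear equations with parameters. Such a solution set, when nonempty, is exactly a coset of the submodule cut out by the associated homogeneous system. This yields the first conclusion, with $N_i := D_i - \overline d_i$ the relevant submodules once base points $\overline d_i \in D_i$ are chosen.

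Next I would linearize the map. Fix $\overline d_1 \in D_1$ and set $\overline d_2 := f(\overline d_1)$; then $g(\overline y) := f(\overline y + \overline d_1) - \overline d_2$ is a pp-definable bijection $N_1 \to N_2$ fixing $0$. Its graph is a nonempty pp-definable subset of $R_R^{2n}$ containing $0$, hence a pp-definable subgroup, and (again via Theorem \ref{quantfree}) a submodule of $R_R^{2n}$. Closure of the graph under addition and the scalar action forces $g$ to be additive and to respect the module structure, so $g$ is a module isomorphism $N_1 \cong N_2$. Applying Theorem \ref{quantfree} to the graph once more, the defining equations express the second block of coordinates linearly in terms of the first, so $g$ is the restriction to $N_1$ of multiplication by a matrix $A' \in M_n(R)$; the same argument applied to $f^{-1}$ produces $B'$ realizing $g^{-1}$ on $N_2$, and on $N_1$ (resp.\ $N_2$) the product $A'B'$ (resp.\ $B'A'$) acts as the identity.

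The final and hardest step is to promote $A'$ to a genuine element $A \in GL_n(R)$ with $f(\overline x) = (\overline x - \overline d_1)A + \overline d_2$ for all $\overline x \in D_1$. Here I would exploit von Neumann regularity: the homogeneous systems defining $N_1,N_2$ realize them as kernels of maps $R_R^n \to R_R^m$ whose images, being finitely generated, are direct summands; consequently $N_1$ and $N_2$ are themselves direct summands of $R_R^n$, cut out by idempotent matrices. The isomorphism $g\colon N_1 \to N_2$ must then be extended to an automorphism of $R_R^n$---equivalently, $A'$ must be corrected on a complement of $N_1$ so as to become invertible---and this is precisely what the elimination of pp-imaginaries \cite[Proposition~10.2.38]{PSL} encodes, identifying pp-definable bijections with isomorphisms in the category of pp-sorts that are realized by invertible matrices. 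I expect this extension-to-$GL_n(R)$ to be the crux: it is where the direct-summand structure supplied by von Neumann regularity and the cited elimination result must combine to produce a single invertible $A$, ruling out the cancellation-type phenomena that would otherwise obstruct the passage from a partially defined matrix to an element of $GL_n(R)$.
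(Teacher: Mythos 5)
Your Steps 1 and 2 are essentially sound, with two caveats. First, the subgroups you obtain are \emph{left} $R$-submodules of $R^n$: pp-definable subgroups of $R_R^n$ are closed under the diagonal action of $\mathrm{End}(R_R)$, which is left multiplication by scalars, not under the right module action; so ``respect the module structure'' (and indeed the statement's own phrase ``right $R$-submodules'') must be read in that sense, and consistently the maps $\overline x\mapsto\overline x A$ are left-linear. Second, extracting $A'$ from the quantifier-free presentation of the graph is not a purely syntactic step: from a homogeneous system $\overline x H_1+\overline y H_2=0$ defining the graph of a function you cannot simply ``read off'' $\overline y$ in terms of $\overline x$; you need von Neumann regularity to split the injective map $\overline y\mapsto\overline y H_2$ (finitely generated images are direct summands), and only then does a splitting matrix $S$ with $H_2S=I_n$ give $\overline y=-\overline x H_1S=\overline x A'$. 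For contrast, over $\mathbb Z$ the quantifier-free formula $2y=x$ defines the graph of a pp-definable bijection $2\mathbb Z\to\mathbb Z$ that is not the restriction of multiplication by any integer matrix, so this step genuinely consumes regularity rather than quantifier elimination.

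The genuine gap is your final step, which you correctly identify as the crux but never prove: you only express the expectation that elimination of pp-imaginaries will extend the summand isomorphism $g\colon N_1\to N_2$ to an element of $GL_n(R)$, ``ruling out the cancellation-type phenomena''. In fact no argument can do this at the stated level of generality, because those cancellation phenomena are \emph{not} ruled out by von Neumann regularity: extending $g$ to an automorphism of $R^n$ is equivalent to $R^n/N_1\cong R^n/N_2$, i.e.\ to internal cancellation, which for regular rings amounts to unit-regularity and can fail. Concretely, let $R=\mathrm{End}_F(V)$ with $V$ of countably infinite dimension (one of the paper's own examples of a regular ring) and take $s,t\in R$ with $ts=1\neq st$. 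Then $D_1=R$ and $D_2=Rt=\{y\in R: y(st)=y\}$ are quantifier-free pp-definable subsets of $R_R$, and $f(x)=xt$ is a pp-definable bijection $D_1\to D_2$; if the conclusion of Proposition \ref{funR} held, evaluating $f(x)=(x-d_1)A+d_2$ at $x=0$ and then $x=1$ would give $t=A\in GL_1(R)=R^\times$, whence $st=1$, a contradiction. So the statement (and with it your final step) is false for arbitrary von Neumann regular rings; it becomes true under unit-regularity (internal cancellation), which holds in every case where the paper actually invokes the proposition---division rings, matrix rings over them, and semisimple rings. Note also that the paper offers no proof to compare against: it cites \cite[Proposition~10.2.38]{PSL} and calls the result an algebraic consequence, so the obstruction you ran into at the last step is a real defect of the statement's generality, not merely of your argument.
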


Our main object of study is the class of \textit{semisimple rings}, i.e., the class of von Neumann regular rings satisfying the (equivalent) conditions of the next theorem.
\begin{thm}\label{smismple}\cite[\S~4.1,4.2]{Rot}
The following are equivalent for a ring $R$.
\begin{enumerate}
    \item The ring $R$ is Noetherian and von Neumann regular.
    \item All right $R$-modules are projective.
    \item All right $R$-modules are injective.
    \item (Wedderburn-Artin) There are division rings $R_1,\cdots,R_k$ such that $R\cong \prod\limits_{i=1}^kM_{q_i}(R_i).$
\end{enumerate} 
\end{thm}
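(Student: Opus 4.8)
The plan is to route every condition through the single hub statement $(\star)$: \emph{the right regular module $R_R$ is semisimple}, i.e.\ every right ideal of $R$ is a direct summand of $R_R$. The first ingredient I would isolate is the standard idempotent lemma for the von Neumann regular case: if $a=axa$, then $e:=ax$ satisfies $e^2=axax=(axa)x=ax=e$ and $eR=aR$ (since $ax\in aR$ and $a=axa=ea\in eR$), so $aR=eR$ is a direct summand via $R=eR\oplus(1-e)R$. An easy induction (replacing two idempotent-generated summands by a single one) upgrades this to: \emph{in a von Neumann regular ring every finitely generated right ideal is generated by an idempotent, hence is a direct summand of $R_R$}.

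With this lemma, $(1)\Rightarrow(\star)$ is immediate: if $R$ is in addition right Noetherian then every right ideal is finitely generated, so by the lemma every right ideal is a direct summand of $R_R$, which is exactly the semisimplicity of $R_R$.

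Next I would dispatch $(\star)\Leftrightarrow(2)\Leftrightarrow(3)$ by the classical splitting package, all three being equivalent to the assertion that every short exact sequence of right $R$-modules splits. Indeed, if $R_R$ is semisimple then so is every free module and hence, as a quotient of a free module, every module is semisimple, so every submodule is a direct summand and every sequence splits; splitting of every sequence forces each module to be a summand of a free module (projective) and a summand of its injective hull (injective); and conversely $(2)$ lets the quotient term of any sequence produce a splitting while $(3)$ lets the sub term extend one. Specializing splitting to $0\to I\to R_R\to R_R/I\to 0$ recovers $(\star)$. The heart of the theorem is then $(\star)\Rightarrow(4)$: decompose $R_R=\bigoplus_{i=1}^k S_i^{\,n_i}$ into isotypic components of the pairwise non-isomorphic simple summands $S_i$, observe that $\operatorname{End}(R_R)\cong R$ via $a\mapsto$ (left multiplication by $a$), and compute, using Schur's lemma, $R\cong\operatorname{End}\!\big(\bigoplus_i S_i^{\,n_i}\big)\cong\prod_{i=1}^k M_{n_i}(D_i)$ with $D_i:=\operatorname{End}(S_i)$ a division ring; this is $(4)$ with $q_i=n_i$ and $R_i=D_i$.

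Finally I would close the web with $(4)\Rightarrow(1)$: each $M_{q}(D)\cong\operatorname{End}_D(D^{q})$ is von Neumann regular (given an endomorphism, invert it on a complement of its kernel and kill a complement of its image to build a weak inverse) and has finite length $q$ as a right module over itself, hence is Noetherian; by Remark~\ref{vNRop} and the fact that a finite product of Noetherian rings is Noetherian, the product $\prod_i M_{q_i}(R_i)$ inherits both properties. The main obstacle is the structure-theorem step $(\star)\Rightarrow(4)$: one must correctly identify $\operatorname{End}(R_R)$ with $R$ rather than $R^{\mathrm{op}}$ and handle the matrix bookkeeping for the isotypic components so that the division rings $\operatorname{End}(S_i)$ assemble into the stated product of matrix rings; everything else is formal once the idempotent lemma and the splitting equivalences are in place.
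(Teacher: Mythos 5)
This theorem is quoted by the paper from Rotman (\S~4.1--4.2) without any proof of its own, and your argument is precisely the classical one that the citation points to: the idempotent lemma for von Neumann regular rings giving $(1)\Rightarrow(\star)$, the splitting package for $(\star)\Leftrightarrow(2)\Leftrightarrow(3)$, and Schur's lemma applied to $\operatorname{End}(R_R)\cong R$ for the Wedderburn--Artin step, with the routine verification for $(4)\Rightarrow(1)$. Your proposal is correct; the only points glossed are standard and easily filled, namely the finiteness of the isotypic decomposition of $R_R$ (the unit $1$ lies in a finite sub-sum, so $R_R$ is a finite direct sum of simples) and the Zorn's-lemma equivalence between ``every submodule is a direct summand'' and ``sum of simple submodules'' implicit in your use of $(\star)$.
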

From the perspective of the computation of model-theoretic $K_1$, the above characterization of semisimple rings demands that we first need to compute $K_1(M_R)$, where $R$ is a division ring or a matrix ring over a division ring. We already addressed the computation of $K_1(M_R)$, where $R$ is a finite division ring, or equivalently, a finite field, in \cite[\S~7]{BK}. Therefore, in this paper, we focus our attention to infinite division rings.

We require some standard properties of modules over a division ring $R$.
\begin{prop}\label{dvprop}\begin{enumerate}
    \item Finitely generated modules over a division ring $R$ satisfy the invariant basis property, i.e., isomorphic finitely generated modules have equal rank.
  %  \item Since $D$ is non-commutative, the ring of endomorphisms for a right $D$-module $M$, $\mathrm{End_D}(M)$ only consists of $D^{op}$-linear morphisms.
    \item There is an obvious isomorphism $(M_q(R))^{op} \cong M_q(R^{op})$ of matrix rings which restricts to a group isomorphism $(GL_q(R))^{op}\cong GL_q(R^{op}).$   
\end{enumerate}
\end{prop}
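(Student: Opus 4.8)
The plan is to handle the two parts separately, since part (1) is the classical linear algebra of division rings while part (2) is a direct transpose computation; neither is deep, but each requires care about \emph{sidedness}.

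For part (1), I would first recall that every finitely generated right module $M$ over a division ring $R$ is free. Starting from a finite generating set, one extracts a maximal right-linearly independent subset $\{v_1,\dots,v_n\}$; maximality forces this set to span, because if some $v\notin\mathrm{span}(v_1,\dots,v_n)$ then a nontrivial relation $\sum_i v_i r_i + vs = 0$ would have $s\neq 0$, whence $v=\sum_i v_i(-r_i s^{-1})$ on using invertibility of $s$, a contradiction. Thus $\{v_1,\dots,v_n\}$ is a basis and ``rank'' is defined. To see it is an invariant I would invoke the Steinitz exchange lemma, whose proof transfers verbatim to a division ring since its only ingredient is the ability to solve $v=\sum_i w_i r_i$ for a particular $w_j$ whenever its coefficient $r_j$ is nonzero, via $r_j^{-1}$; comparing two bases of $M$ in both directions yields equality of their cardinalities, so $M\cong M'$ forces equal ranks. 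Alternatively, since $R$ is simple as a right $R$-module, $R^n$ has composition length $n$, and the Jordan--H\"older theorem gives the invariance of rank immediately.

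For part (2), I would exhibit the isomorphism explicitly as the transpose map $\phi(A)=A^{\mathrm T}$, with the entries of $A^{\mathrm T}$ read in $R^{op}$. The map is additive, unital, and bijective (transpose being its own inverse), so the only substantive point is multiplicativity as a map \emph{out of the opposite ring}. Writing $A=(a_{ij})$ and $B=(b_{ij})$, the product of $A$ and $B$ in $(M_q(R))^{op}$ is $BA$, so the $(i,k)$-entry of $\phi(BA)=(BA)^{\mathrm T}$ is $\sum_j b_{kj}a_{ji}$; on the other side, the $(i,k)$-entry of $A^{\mathrm T}B^{\mathrm T}$ formed in $M_q(R^{op})$ is $\sum_j (A^{\mathrm T})_{ij}(B^{\mathrm T})_{jk}$, where each factor-product is taken in $R^{op}$, and since the $R^{op}$-product of $a_{ji}$ and $b_{kj}$ equals the $R$-product $b_{kj}a_{ji}$, this sum is again $\sum_j b_{kj}a_{ji}$, matching. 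Finally, a ring isomorphism carries the unit group of the source isomorphically onto that of the target; the units of $(M_q(R))^{op}$ form the group $(GL_q(R))^{op}$ and those of $M_q(R^{op})$ form $GL_q(R^{op})$, so restricting $\phi$ gives the stated group isomorphism.

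The one place demanding genuine attention is the multiplicativity check in part (2): the reversal of multiplication in the opposite ring has to be tracked simultaneously at the matrix level (where $A\cdot B$ becomes $BA$) and at the entry level (where $a\cdot b$ becomes $ba$), and it is precisely the compatibility of these two reversals with transposition that makes $\phi$ a homomorphism rather than an anti-homomorphism. Part (1) is otherwise routine once one commits to performing all scalar manipulations consistently on the right.
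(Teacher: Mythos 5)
Your proof is correct. The paper itself offers no proof of this proposition --- it is stated as a collection of standard facts (the isomorphism in part (2) is even called ``obvious,'' with the transpose map being the intended witness) --- and your arguments supply exactly the expected standard justifications: freeness plus Steinitz exchange (or the cleaner Jordan--H\"older/composition-length argument) for part (1), and the entry-level verification that transposition intertwines the two reversals of multiplication for part (2), with the sidedness bookkeeping done correctly in both parts.
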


Since a division ring $R$ is Noetherian and every $R$-module is free, thanks to Proposition \ref{TequalsTaleph0}(1), the proof of \cite[Proposition~6.1]{BK}, that does not use commutativity, could be adapted to obtain the following.
\begin{thm}\label{divth}
If $R$ is a division ring and $M_R$ is an infinite module then $\overline{\mathcal{X}}^*(M_R) \cong \mathbb N$.
\end{thm}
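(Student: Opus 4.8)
The goal is to show that for an infinite module $M_R$ over a division ring $R$, the monoid $\overline{\mathcal{X}}^*(M_R)$ of non-trivial colours is isomorphic to the additive monoid $\mathbb{N}$. Let me sketch the structure.

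First I would invoke quantifier elimination. Since a division ring is von Neumann regular (every non-zero element is a unit, so $a r a = a$ with $r = a^{-1}$ works, and $0$ is trivial), Theorem \ref{quantfree} applies, and so every $pp$-formula over $R$ is equivalent to a quantifier-free one. Combined with Proposition \ref{funR}, this tells me that every $pp$-definable subset of $M^n$ is a coset of an $R$-submodule, i.e. an affine subspace, and that $pp$-definable bijections are precisely affine maps $\overline{x} \mapsto (\overline{x} - \overline{d}_1)A + \overline{d}_2$ with $A \in GL_n(R)$. The key point is that Proposition \ref{funR} is stated for $R_R^n$, so I would first transfer it to $M_R$: because $M_R$ is infinite and free (every module over a division ring is free, as in Proposition \ref{TequalsTaleph0}(1) giving $\mathcal{L}_n^\circ(M_R) \cong \mathcal{L}_n^\circ(R_R)$), the lattice of $pp$-definable subgroups of $M^n$ matches that of $R_R^n$, and the affine description of definable bijections carries over.

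Next I would set up the isomorphism itself. To each non-trivial colour — a $pp$-definable bijection class of a non-empty affine subspace of some $M^n$ — I assign its vector-space dimension $d \in \mathbb{N}$ over $R$ (the rank of the underlying submodule, well-defined by the invariant basis property, Proposition \ref{dvprop}(1)). I must then check three things: (i) this dimension is a complete invariant, i.e. two affine subspaces are $pp$-definably bijective iff they have the same dimension; (ii) the map is surjective onto $\mathbb{N}$, which follows because $M$ is infinite so affine subspaces of every finite dimension exist; and (iii) the map is a monoid homomorphism, i.e. the Cartesian product of an affine space of dimension $d$ with one of dimension $e$ is $pp$-definably bijective to an affine space of dimension $d + e$, which is immediate since the product is again affine of dimension $d+e$. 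For the forward direction of (i), an affine bijection preserves dimension; for the reverse, any two affine subspaces of the same dimension are related by an invertible affine transformation, which by the transferred Proposition \ref{funR} is $pp$-definable.

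The main obstacle, as the paragraph preceding the statement hints, is precisely the non-commutativity of $R$: I cannot quote \cite[Proposition~6.1]{BK} verbatim but must verify that its argument goes through without commutativity. The delicate points are that dimension over a division ring is still well-defined (Proposition \ref{dvprop}(1) is invoked exactly to license this) and that the affine normal form of Proposition \ref{funR} — with the matrix $A$ acting on the right — is compatible with the right-module conventions throughout. I expect the bulk of the work to be bookkeeping: confirming that the bijection $d \mapsto [[\text{affine space of dimension } d]]$ respects the monoid structure and is injective, where injectivity reduces cleanly to the invariance of dimension under the affine transformations furnished by quantifier elimination. No genuinely new idea beyond \cite[Proposition~6.1]{BK} should be required; the content is the careful transfer of that commutative argument to the division-ring setting.
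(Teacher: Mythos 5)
Your overall strategy (quantifier elimination via Theorem \ref{quantfree}, the affine normal form of Proposition \ref{funR}, transfer from $R_R$ to $M_R$ via Proposition \ref{TequalsTaleph0}(1), then a dimension invariant) is exactly the paper's route, but your choice of invariant is wrong, and the error propagates through your claims (i) and (ii). You assign to a colour the $R$-rank of the underlying submodule. By quantifier elimination, a $pp$-definable subgroup of $M^n$ is the solution set $\{\overline x\in M^n:\overline x A=0\}$ of a system of $R$-linear equations in the coordinates; it is \emph{not} an arbitrary $R$-submodule of $M^n$. Consequently, if $M$ is infinite-dimensional over $R$ (which happens, e.g., whenever $R$ is a finite field and $M$ is infinite --- a case the theorem covers), every nonzero $pp$-definable subgroup has infinite $R$-rank, so your map does not even take values in $\mathbb N$; and if $M\cong R^k$ with $R$ infinite and $k\geq 2$, the $R$-ranks that occur are exactly the multiples of $k$, so your surjectivity claim (ii) fails: there is no $pp$-definable ``line'' in $M^1$, since the only $pp$-definable subgroups of $M^1$ are $0$ and $M$ (any nontrivial equation $xr=0$ defines $0$). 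The same misconception breaks the reverse direction of (i): two affine subspaces of $M^n$ of equal $R$-dimension are indeed related by an $R$-linear affine automorphism of $M^n$, but such an automorphism is in general \emph{not} $pp$-definable --- Proposition \ref{funR} only delivers maps $\overline x\mapsto(\overline x-\overline d_1)A+\overline d_2$ with $A\in GL_n(R)$, i.e.\ matrices over $R$ acting on coordinates, not arbitrary elements of $\mathrm{Aut}_R(M^n)$. In particular, when $M$ is infinite-dimensional, $M^1$ and $M^2$ have the same $R$-rank (both infinite) but are not $pp$-bijective, so your ``complete invariant'' is not one.

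The fix, which is what the paper's adaptation of \cite[Proposition~6.1]{BK} actually does, is to measure dimension \emph{relative to $M$} rather than over $R$: row-reducing the defining matrix over the division ring (rank normal form via $GL_n(R)$) shows that every nonempty $pp$-definable subset of $M^n$ is $pp$-definably bijective to $M^d$ with $d=n-\mathrm{rank}(A)$; and a $pp$-definable bijection $M^d\to M^e$ is, up to translations, given by a $d\times e$ matrix $B$ over $R$ of some rank $r$, whose kernel is a copy of $M^{d-r}$ and whose image is a coordinate copy of $M^r$, so bijectivity forces $d=r=e$ since $M\neq 0$. Hence $\overline{\mathcal X}^*(M_R)=\{[[M^d]]:d\in\mathbb N\}$ and $d\mapsto[[M^d]]$ is the desired monoid isomorphism. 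Your invariant agrees with this one only in the special case $M=R_R$, which is presumably the source of the confusion; note that the paper's Theorem \ref{matdiv} ($\overline{\mathcal X}^*\cong q\mathbb N\cong\mathbb N$) records precisely the discrepancy between the two notions of dimension that your proposal overlooks.
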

% \begin{proof}
% Every division ring is Noetherian. Hence the $pp$-definable subgroups of $R_R$ are right ideals (see \cite[Exercise~2, p.19]{PreBk}), i.e., either $\{0\}$ or $R$. Moreover, since every $R$-module is free, thanks to Proposition \ref{dvprop}(1), a pp-definable subgroup of $R_R^n$ is isomorphic to $R_R^k$ for $k\leq n$.
% \end{proof}

% \section{Morita equivalence between $M_q(R)$ and $R$}
% The purpose of this section is to recall the Morita equivalence. Specifically if $R$ is a division ring then we show the relation between the theory of $R$ and $M_q(R)$.
Recall that a unital ring $R$ is Morita equivalent to the matrix ring $M_q(R)$ for any $q\geq 1$, i.e., there is an equivalence $F_q:\mathrm{Mod}\text{-}R\to\mathrm{Mod}\text{-}M_q(R)$ between the module categories.
\begin{rmk}\label{chad}
Every right ideal of $M_q(R)$ is isomorphic to $F_q(N)$ for a submodules $N$ of $R_R^q$. In particular, the $M_q(R)$-module $M_{1\times q}(R)$ is isomorphic to $F_q(R_R)$. Moreover, if $R$ is a division ring, then $F_q(R_R^k)$ is a free module over $R$ with rank $kq$ thanks to \ref{dvprop}(1). In particular, if $M_{M_q(R)}$ is finite then so is $R$.
\end{rmk}

This remark together with Propositions \ref{TequalsTaleph0}(1) and \ref{funR} ensure that the proof of Theorem \ref{divth} could be readily adapted to yield the following.
\begin{thm}\label{matdiv}
Suppose $R$ is a division ring and $M_{M_q(R)}$ is infinite. Then $\overline{\mathcal{X}}^*(M_{M_q(R)})\cong q\mathbb N\cong \mathbb N$.
\end{thm}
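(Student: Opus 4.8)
The plan is to transplant the argument behind Theorem \ref{divth} from the division ring $R$ to the matrix ring $M_q(R)$, the only genuinely new feature being that the simple module now has $R$-rank $q$ rather than $1$. Since $M_q(R)$ is von Neumann regular, Theorem \ref{quantfree} tells us that every $pp$-formula over $M_q(R)$ is quantifier-free, so each non-empty member of $\overline{\mathcal L}(M_{M_q(R)})$ is a coset of a $pp$-definable subgroup, and each such subgroup is the solution set in $M^n$ of a finite system of $M_q(R)$-linear equations, hence an $M_q(R)$-submodule. First I would use Proposition \ref{TequalsTaleph0}(1), together with the Morita reduction of Remark \ref{chad} when $M$ is not itself free, to replace $M$ by the free regular module: the meet-semilattice $\mathcal L_n^\circ(M_{M_q(R)})$ is controlled by $\mathcal L_n^\circ((M_q(R))_{M_q(R)})$, so it suffices to classify submodules of, and $pp$-bijections between cosets inside, the free module $(M_q(R))^n$.

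Next I would attach to each non-empty $A\in\overline{\mathcal L}$ the module-isomorphism type of the subgroup $H_A$ of which it is a coset. By Remark \ref{chad}, $(M_q(R))^n\cong F_q(R_R^{qn})$ and every submodule $H$ is of the form $F_q(N)$ for a submodule $N\le R_R^{qn}$; as $R$ is a division ring, $N$ is free of some rank $\nu\in\{0,1,\dots,qn\}$, so $H\cong F_q(R_R^\nu)$ is a direct sum of $\nu$ copies of the simple module $F_q(R_R)\cong M_{1\times q}(R)$ and has $R$-rank $q\nu$. Thus the natural invariant is the number $\nu(A)\in\mathbb N$ of simple summands of $H_A$, equivalently $\tfrac1q\dim_R H_A$. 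I would then verify that $\nu$ descends to colours and is multiplicative: a translation is a $pp$-bijection, so $\nu$ is independent of the coset representative; by Proposition \ref{funR} every $pp$-bijection has linear part in $GL_n(M_q(R))$, i.e.\ is a module automorphism of the ambient free module followed by a translation, so it carries $H_A$ to a module-isomorphic $H_B$ and preserves $\nu$; and the Cartesian product of cosets realises $H_{A\times B}\cong H_A\oplus H_B$, whence $\nu(A\times B)=\nu(A)+\nu(B)$. Surjectivity onto $\mathbb N$ is immediate, since $(M_q(R))^n$ contains submodules isomorphic to $F_q(R_R^\nu)$ for every $\nu\le qn$ and $n$ is arbitrary.

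The remaining point, and the one I expect to be the main obstacle, is injectivity: if $\nu(A)=\nu(B)$ then $A$ and $B$ must be $pp$-bijective. Here I would translate $A,B$ to the subgroups $H_A,H_B$, which are isomorphic $M_q(R)$-modules, and promote this abstract isomorphism to a $pp$-bijection by extending it to an element of $GL_N(M_q(R))$ on a sufficiently large free module. This is exactly where semisimplicity does the work: over $M_q(R)$ every submodule is a direct summand, so after stabilising the complements (and comparing subsets of different powers $M^m$ and $M^n$ by the same embedding device used in the proof of Theorem \ref{divth}) the isomorphism $H_A\to H_B$ extends to an ambient module automorphism, giving the desired $pp$-bijection. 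The difficulty is bookkeeping rather than structural, since all the structural facts are already quoted. Assembling the pieces, $A\mapsto\nu(A)$ is a monoid isomorphism $\overline{\mathcal X}^*(M_{M_q(R)})\xrightarrow{\ \sim\ }(\mathbb N,+)$; recording instead the $R$-rank $q\nu(A)$ identifies the image with $q\mathbb N$, so that $\overline{\mathcal X}^*(M_{M_q(R)})\cong q\mathbb N\cong\mathbb N$ as claimed.
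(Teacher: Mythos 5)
Your proposal is correct and takes essentially the same route as the paper: the paper's entire proof consists of observing that Remark \ref{chad} (the Morita reduction), Proposition \ref{TequalsTaleph0}(1) (reduction to the regular module) and Proposition \ref{funR} (the affine form of $pp$-bijections) allow the proof of Theorem \ref{divth} to be readily adapted, and these are exactly the ingredients you deploy. Your explicit invariant $\nu(A)=\tfrac{1}{q}\dim_R H_A$, its invariance under $pp$-bijections, its additivity under products, and the promotion of abstract module isomorphisms to $pp$-bijections via semisimplicity constitute precisely the intended adaptation, and they correctly account for the identification of the colour monoid with $q\mathbb N\cong\mathbb N$.
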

The above theorem together with Theorem \ref{GrothRing} and Example \ref{finitestrKtheory} yields the following.
\begin{cor}[Weak Morita invariance of model-theoretic $K_0$]
If $R$ is a division ring, $q\geq 1$, and $M$ is an infinite $M_q(R)$-module, then $K_0({M}_{M_q(R)})\cong K_0(M_R)$. As a consequence, for all division rings $R$ and $q\geq1$, we have $K_0((M_q(R))_{M_q(R)})\cong K_0(R_R)$.
\end{cor}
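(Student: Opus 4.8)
The plan is to reduce both sides to the monoid-ring description of $K_0$ provided by Theorem \ref{GrothRing}, namely $K_0(N)\cong\mathbb Z[\overline{\mathcal X}^*(N)]$ whenever the theory of the module $N$ is closed under products, and then to observe that the two monoids of colours coincide.

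For the first assertion, since $M$ is an infinite $M_q(R)$-module, Theorem \ref{matdiv} gives $\overline{\mathcal X}^*(M_{M_q(R)})\cong q\mathbb N\cong\mathbb N$, while the $R$-module $M_R$ corresponding to $M$ under the Morita equivalence is again infinite and satisfies $\overline{\mathcal X}^*(M_R)\cong\mathbb N$ by Theorem \ref{divth}; both isomorphisms are of monoids under the multiplication induced by Cartesian product. Granting that the two theories are closed under products (see below), Theorem \ref{GrothRing} yields
\[
K_0(M_{M_q(R)})\cong\mathbb Z[\mathbb N]\cong K_0(M_R).
\]

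For the consequence I would argue by cases on $|R|$. If $R$ is infinite, then the regular modules $(M_q(R))_{M_q(R)}$ and $R_R$ are both infinite, so Theorems \ref{matdiv} and \ref{divth} give $\overline{\mathcal X}^*\cong\mathbb N$ in each case and Theorem \ref{GrothRing} yields $K_0((M_q(R))_{M_q(R)})\cong\mathbb Z[\mathbb N]\cong K_0(R_R)$. If $R$ is finite, hence a finite field, then $M_q(R)$ and $R$ are finite rings with at least two elements, so both regular modules are finite structures with at least two elements and Example \ref{finitestrKtheory} gives $K_0\cong\mathbb Z$ in each case; in particular the two are again isomorphic. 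The finite case genuinely must be separated off, since there $\overline{\mathcal X}^*$ is finite and $K_0\cong\mathbb Z\not\cong\mathbb Z[\mathbb N]$, so it cannot be folded into the infinite computation.

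The step I expect to be the main obstacle is verifying the closure-under-products hypothesis of Theorem \ref{GrothRing} for the $M_q(R)$-module $M$. When $R$ is infinite this is easy: every nonzero pp-definable subquotient $A/(A\cap B)$ of $M^n$ is a nonzero $M_q(R)$-module, hence contains a copy of the simple module $R^q$ and so is infinite, forcing the index $[A:A\cap B]$ to be $1$ or $\infty$. When $R$ is finite the simple module $R^q$ is finite and this direct argument breaks down; here I would instead transfer the property across the Morita equivalence from the infinite-dimensional $R$-vector space $M_R$, whose pp-definable subgroups are spread across infinitely many coordinates so that every proper inclusion already has infinite index --- the mechanism recorded for infinite free modules over finite rings in \cite[Lemma~1.2.3]{PreBk} and in Proposition \ref{TequalsTaleph0}(2).
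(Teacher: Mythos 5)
Your proposal is correct and takes essentially the same route as the paper, whose proof is precisely the combination of Theorem \ref{matdiv} (and Theorem \ref{divth}), the monoid-ring description of Theorem \ref{GrothRing}, and Example \ref{finitestrKtheory} for the finite case of the second assertion. Your additional verification of the closure-under-products hypothesis (the direct simple-submodule argument for infinite $R$, and Prest's lemma via Proposition \ref{TequalsTaleph0}(2) for finite $R$) merely fills in a step that the paper disposes of by citation.
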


\begin{rmk}\label{K0weakMorita}
The above corollary is a weak Morita invariance of model-theoretic $K_0$, and not the usual Morita invariance as seen in its algebraic counterpart \cite[Corollary~II.2.7.1]{Weibel} since the isomorphism $K_0(R_R)\cong K_0(M_q(R)_{M_q(R)})$ is not induced by the functor $F_q$ between module categories when $R$ is infinite. Indeed, the functor $F_q$ naturally yields a bijective map from $\mathcal{L}_k^\circ(R_R)$ to $\mathcal L_k^\circ(M_q(R)_{M_q(R)})$ thanks to Proposition \ref{funR}, but the latter bijection fails to extend to a bijective map from $\mathcal{L}_k(R_R)$ to $\mathcal L_k(M_q(R)_{M_q(R)})$ since there are far too many parameters on the right side compared to the left side.
\end{rmk}

\section{$K_1$ of modules over a division ring}\label{K1div}
The goal of this section is to prove Theorem \ref{K1Final} when $q=1$, i.e., the computation of $K_1(M_R)$ for a non-zero right $R$-module $M_R$ for a division ring $R$. Note that $M$ is a free $R$-module. We may assume that $M$ is infinite for otherwise the conclusion follows from Example \ref{finitestrKtheory}. 

Recall from Proposition \ref{TequalsTaleph0}(2) that the theory of $M_R$ is closed under products. Moreover, Theorem \ref{divth} yields that $\overline{\mathcal{X}}^*(M_R) \cong \mathbb N$. The proof for this case of the theorem is along lines similar to the computation of $K_1(V_F)$ \cite[\S~5]{BK}, where $V_F$ is an infinite vector space over an infinite field $F$. We follow all three steps of the proof of the latter while highlighting the changes for the division ring case. For brevity, we denote $\C(M_R)$ by $\C$ and $\overline{\mathcal{X}}^*(M_R)$ by $\overline{\mathcal{X}}^*$.
% \begin{thm}\label{K1FINAL}
% Suppose $M_R$ is a non-zero right $R$-module over a division ring $R$, and $M$ is finite if $R\cong\mathbb F_2$. If $R^\times$ is the group of units in $R$, then
% \begin{equation*}
% K_1(M_R)\cong K_1(R_R)\cong\begin{cases}\mathbb Z_2&\text{if }M\text{ is finite};\\\mathbb Z_2\oplus\bigoplus\limits_{i=1}^\infty\left(GL_i(R)^{ab}\oplus\mathbb Z_2\right)\cong\mathbb Z_2\oplus\bigoplus\limits_{i=1}^\infty\left(\frac{R^\times}{[R^\times , R^\times]}\oplus\mathbb Z_2\right)&\text{otherwise}.\end{cases}
% \end{equation*}
% \end{thm}

\noindent{\textbf{Step I:}} In this step, we associate a ``dimension'' $\dim(f)$ to each automorphism $f$ of a definable set through its ``support'', and show that the groups of bounded-dimension automorphisms of sufficiently large definable sets are isomorphic.

Recall the definition of dimension of a definable set from \cite[Definition~6.3]{BK}: for $E\in\C$, set $\dim(E) :=\begin{cases}-\infty&\text{if }E=\emptyset;\\ \max\{\mathfrak{A}\in\overline{\mathcal{X}}^*\mid\Lambda_{\mathfrak{A}}(E) \neq 0\}&\text{otherwise,}\end{cases}$ where $\Lambda_{\mathfrak A}$ is a definable-bijection-invariant integer-valued function defined in \cite[\S~5.2]{Kuber1} that takes value $0$ at all but finitely many inputs. In other words, if $E\neq\emptyset$, then $\dim(E)$ is the degree of the polynomial $[E]\in K_0(M_R)\cong\mathbb Z[\mathbb N]\cong\mathbb Z[X]$.

\begin{defn}\label{supp}
Let $E\in\C$ and $f\in\Aut_\C(E)$. The \emph{support of $f$} is the (definable) set $\operatorname{Supp}(f):=\{a\in E: f(a)\neq a\}$. Set $\dim(f):=\dim(\operatorname{Supp}(f))$.
\end{defn}
The main result in this step is the following.
\begin{prop}\cite[Proposition ~5.4]{BK}\label{autinftydim}
For $E\in\C$, let $\Omega_m(E):=\{f\in\Aut_\C(E):\dim(f)\leq m\}$ be the subgroup of $\Aut_\C(E)$ of automorphisms fixing all elements of $E$ outside a subset of dimension at most $m$. If $E_1,E_2\in\C$ have dimension strictly greater than $m$, then $\Omega_m(E_1)\cong\Omega_m(E_2)$.
\end{prop}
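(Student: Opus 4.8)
The plan is to realize each group $\Omega_m(E)$ as a directed colimit of the full definable-automorphism groups of its small definable subsets, and then to argue that this colimit is insensitive to $E$ as soon as $\dim(E)>m$. First I would record that $\Omega_m(E)$ is genuinely a subgroup of $\Aut_\C(E)$: the identity has empty support, $\operatorname{Supp}(f^{-1})=\operatorname{Supp}(f)$, and $\operatorname{Supp}(g\circ f)\subseteq\operatorname{Supp}(f)\cup\operatorname{Supp}(g)$. Since $\dim$ of a definable set is the degree of its class in $K_0(M_R)\cong\mathbb Z[X]$, it is monotone under inclusion and satisfies $\dim(A\cup B)=\max\{\dim A,\dim B\}$, so $\dim(g\circ f)\leq\max\{\dim f,\dim g\}\leq m$.

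Next comes the local structure. For a fixed definable $D\subseteq E$, let $H_D:=\{f\in\Aut_\C(E):\operatorname{Supp}(f)\subseteq D\}$. Restriction to $D$ together with extension by $\id_{E\setminus D}$ gives a group isomorphism $H_D\cong\Aut_\C(D)$: any definable bijection of $D$ glues with the identity on the definable complement $E\setminus D$ to a definable bijection of $E$ supported in $D$, and conversely. Because $\operatorname{Supp}(f)$ is itself definable of dimension $\dim(f)$, every $f\in\Omega_m(E)$ lies in $H_{\operatorname{Supp}(f)}$, and the subsets $D\subseteq E$ with $\dim(D)\leq m$ form a directed poset under inclusion. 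Hence
\begin{equation*}
\Omega_m(E)=\varinjlim_{\substack{D\subseteq E\\ \dim(D)\leq m}}\Aut_\C(D),
\end{equation*}
the colimit being taken along the inclusion-induced embeddings $H_D\hookrightarrow H_{D'}$ for $D\subseteq D'$.

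The crux is then to show this colimit depends only on $m$, not on $E$, whenever $\dim(E)>m$. I would do this by exhibiting, inside any such $E$, a cofinal chain $D_1\subseteq D_2\subseteq\cdots$ of definable dim-$\leq m$ subsets whose definable-bijection types (equivalently, $K_0$-polynomials of degree $\leq m$) are prescribed independently of $E$ and compatibly with the inclusions. Two ingredients feed in: (i) since $\dim(E)>m$, the grading $\overline{\mathcal X}^*(M_R)\cong\mathbb N$ of Theorem \ref{divth} forces $E$ to contain a monochromatic piece of some dimension $d\geq m+1$, inside which one finds definable subsets realizing every degree-$\leq m$ polynomial with arbitrarily large coefficients, so each dim-$\leq m$ type embeds in $E$ and the chain can be made cofinal; (ii) the explicit $GL_n(R)$-affine form of definable bijections from Proposition \ref{funR} lets me transport these subsets between $E_1$ and $E_2$ by genuine $\C$-isomorphisms, making the chosen chains termwise definably bijective in a way commuting with the inclusions. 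A back-and-forth interleaving of the two cofinal chains then induces the isomorphism $\Omega_m(E_1)\cong\Omega_m(E_2)$.

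I expect step three to be the main obstacle, specifically the verification that a set of dimension $>m$ really contains a cofinal family of dim-$\leq m$ subsets realizing all required types. This is where the grading of colours, the completeness of the polynomial invariant, and the restricted affine shape of definable maps must be combined with care; it is also the only place where the division-ring hypothesis, rather than the field case treated in \cite{BK}, genuinely enters, through Proposition \ref{funR}. The hypothesis $\dim(E)>m$ (as opposed to $\dim(E)=m$, where $\Omega_m(E)=\Aut_\C(E)$ would depend on $E$) is exactly what guarantees the unbounded room needed for cofinality.
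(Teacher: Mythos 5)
Your opening two steps are fine: $\Omega_m(E)$ is indeed a subgroup, and the identification of $\Omega_m(E)$ with the directed union $\varinjlim_{D\subseteq E,\,\dim(D)\leq m}\Aut_\C(D)$ over the inclusion-poset of dimension-$\leq m$ definable subsets is correct. The gap is in the crux step: you need a chain $D_1\subseteq D_2\subseteq\cdots$, indexed by $\mathbb N$, that is cofinal in this poset, and no such chain exists in general. The proposition carries no countability hypothesis on $R$ or $M$, and when $M$ is uncountable the poset has uncountable cofinality. The case $m=0$ makes this concrete: every finite-support permutation of $E$ is definable (its graph is a finite modification of the diagonal), so $\Omega_0(E)=\operatorname{FSym}(E)$, which has cardinality $|E|=|M|$ since $\dim(E)\geq 1$ forces $E$ to contain a block of positive dimension; on the other hand, dimension-$0$ definable sets are finite, so any countable chain of them covers only a countable subset of $E$ and can never be cofinal under inclusion. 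The colimit along any such chain is $\operatorname{FSym}(\mathbb N)$, a countable group, so your back-and-forth would "prove" $\Omega_0(E_1)\cong\operatorname{FSym}(\mathbb N)\cong\Omega_0(E_2)$, which is false for uncountable $M$. (If instead you read "cofinal" in the weaker sense that every small type merely embeds into some $D_n$ up to definable bijection, then the identification of the chain-colimit with $\Omega_m(E)$ is exactly what fails, for the same reason.) Note that the proposition is nevertheless true for $m=0$: conjugation by an arbitrary, typically non-definable, bijection $E_1\to E_2$ preserves finitary permutations. This already shows the desired isomorphism cannot in general be assembled from definable identifications of an exhausting nested family of definable pieces, which is what your plan requires.

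The repair is not to pass to transfinite chains: for $m=0$ every chain of finite sets is countable anyway, and for $m\geq 1$ a limit stage would require a definable bijection extending the union of infinitely many previously chosen definable bijections, which a map defined by a single formula with finitely many parameters (cf.\ Proposition \ref{funR}) cannot accommodate in general. The paper itself does not reprove the statement; it cites \cite[Proposition~5.4]{BK}, and the shape of that proof is visible in \S~\ref{K1ss}, where the vector-dimension analogue (Proposition \ref{autinftydimss}) is obtained by adapting it: the only module-specific input is the overlap lemma, Lemma \ref{defbij}, which from $\dim(E_l)>m$ produces a definable bijection $g:D\to E_2$ with $\dim(D\cap E_1)>m$, using block decompositions (Remark \ref{blockdim}) and the grading $\overline{\mathcal X}^*\cong\mathbb N$. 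The comparison of $\Omega_m(E_1)$ with $\Omega_m(E_2)$ then proceeds through this single large common overlap rather than through a countable exhaustion by small pieces. Your ingredient (i) --- locating inside a set of dimension $>m$ standard monochromatic pieces of prescribed type --- is close in spirit to that overlap lemma; the fix for your proposal is to redirect it toward producing such an overlap and to abandon the countable-chain cofinality claim entirely.
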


\noindent{\textbf{Step II:}} We first recall the basic notations for the reader's ease. For all $n \in \mathbb N$, set $\Omega_n^n:=\Aut_{\C}(M^n)$. For each $0\leq m<n$, let $\Omega^n_m:=\Omega_m(M^n)$ and $\Sigma^n_m$ denote the finitary permutation group on a countable set of cosets of an $m$-rank submodule of $M^n$. For each $n\geq 1$, let $\Upsilon^n:=\Upsilon^n(M_R)$ denote the subgroup of $\Aut_{\C}(M^n)$ consisting only of $pp$-definable bijections. Since  the group of $R$-linear automorphisms of $R_R^n$ is $ GL_n(R)^{op} \cong GL_n(R^{op})$ (see Proposition \ref{dvprop}(2) for the last isomorphism), it follows from Proposition \ref{funR} that a $pp$-definable bijection is in fact a definable linear bijection and that $\Upsilon^n$ is the group $GL_n(R^{op})\ltimes M^n$, where $GL_n(R^{op})$ acts on $M^n$ on the right by matrix multiplication. Furthermore, subgroups $\Upsilon^n_m:=\Upsilon^m\wr\Sigma^n_m$ of $\Omega^n_m$ for $1\leq m<n$ satisfy $\Omega^n_m\cong\Upsilon^n_m\ltimes\Omega^n_{m-1}$ as shown in Step II of \cite[\S~5]{BK} using Proposition \ref{autinftydim}. The rest of the proof of Step II there follows verbatim to conclude 
$$\Omega^n_n\cong\Upsilon^n\ltimes(\Upsilon^n_{n-1}\ltimes(\Upsilon^n_{n-2}\ltimes(\cdots(\Upsilon^n_1\ltimes\Omega^n_0)\cdots))).$$
\noindent{\textbf{Step III:}} In this final step, we first compute $(\Omega{^n_n})^{ab}$ exactly as in Step III of \cite[\S~5]{BK} except for a very subtle change where we replace $GL_n(R)$ with $(GL_n(R))^{op}$ as explained in the step above.
\begin{prop}
If $R$ is a division ring, $|R|\neq2$, and $M$ is infinite, then for each $n\geq 1$, we have
\begin{equation*}
(\Omega^n_n)^{ab}\cong(GL_n(R))^{ab}\oplus\bigoplus_{i=0}^{n-1}\left((GL_i(R))^{ab}\oplus\mathbb Z_2\right)\cong \frac{R^\times}{[R^\times, R^\times]}\oplus\bigoplus_{i=1}^{n-1}\left(\frac{R^\times}{[R^\times, R^\times]}\oplus\mathbb Z_2\right)\oplus\mathbb Z_2.
\end{equation*}
\end{prop}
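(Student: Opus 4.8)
The plan is to feed the iterated semidirect product decomposition from Step~II,
\[
\Omega^n_n\cong\Upsilon^n\ltimes(\Upsilon^n_{n-1}\ltimes(\cdots(\Upsilon^n_1\ltimes\Omega^n_0)\cdots)),
\]
into the abelianization functor and compute it one level at a time. The basic tool is the standard formula for the abelianization of a semidirect product $N\rtimes H$ with $N$ normal, namely $(N\rtimes H)^{ab}\cong (N^{ab})_H\oplus H^{ab}$, where $(N^{ab})_H$ denotes the group of coinvariants of the induced conjugation action of $H$ on $N^{ab}$. Reading the decomposition with $\Omega^n_{m-1}$ normal in $\Omega^n_m$ and quotient $\Upsilon^n_m$, I would induct on $m$, so that the inductive step reads
\[
(\Omega^n_m)^{ab}\cong\big((\Omega^n_{m-1})^{ab}\big)_{\Upsilon^n_m}\oplus(\Upsilon^n_m)^{ab}.
\]
This reduces everything to (i) abelianizing each building block $\Upsilon^n_m$ and the base $\Omega^n_0$, and (ii) showing the coinvariants cause no further collapse.

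For the building blocks, I would first abelianize $\Upsilon^m=GL_m(R^{op})\ltimes M^m$, and this is exactly where the hypothesis $|R|\neq 2$ enters. Since $R^\times$ then contains a unit $\lambda\neq 1$, the scalar matrix $\lambda I$ lies in $GL_m(R^{op})$ and acts on the abelian group $M^m$ by multiplication by $\lambda$, so every $v\in M^m$ satisfies $\lambda v-v=(\lambda-1)v$ with $\lambda-1$ invertible in the division ring $R$; hence the coinvariants $(M^m)_{GL_m(R^{op})}$ vanish and $(\Upsilon^m)^{ab}\cong(GL_m(R^{op}))^{ab}$, which Proposition~\ref{dvprop}(2) identifies with $(GL_m(R))^{ab}$. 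For the wreath products I would invoke the results on abelianizations of wreath products with the finitary symmetric group on a countable set from \cite[\S~2]{BK}, giving $(\Upsilon^n_m)^{ab}=(\Upsilon^m\wr\Sigma^n_m)^{ab}\cong(\Upsilon^m)^{ab}\oplus\mathbb Z_2\cong(GL_m(R))^{ab}\oplus\mathbb Z_2$, the $\mathbb Z_2$ arising from the sign map on $\Sigma^n_m$ and the base contributing only the diagonal copy of $(\Upsilon^m)^{ab}$ after permutation coinvariants. The base case $(\Omega^n_0)^{ab}\cong\mathbb Z_2$ is the abelianization of the finitary symmetric group via its sign homomorphism.

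The main obstacle is step~(ii): verifying that the conjugation action of $\Upsilon^n_m$ on $(\Omega^n_{m-1})^{ab}$ is trivial, so that $\big((\Omega^n_{m-1})^{ab}\big)_{\Upsilon^n_m}\cong(\Omega^n_{m-1})^{ab}$ and the inductive step becomes a genuine direct sum. Here I would argue that each cumulative summand of $(\Omega^n_{m-1})^{ab}$ is detected by a conjugation-invariant quantity: the $\mathbb Z_2$ factors by parity, which admits no nontrivial automorphisms, and the $(GL_i(R))^{ab}$ factors by the Dieudonn\'e determinant, which is invariant under conjugation in the ambient group. Extra care is needed because $\Sigma^n_m$ also permutes the coordinate copies inside the wreath factors of $\Omega^n_{m-1}$; I would check that these permutations act by already-identified diagonal elements on the abelianization, so no new relations appear beyond those recorded when abelianizing each $\Upsilon^n_j$ separately. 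This is precisely the ``very subtle'' bookkeeping of Step~III of \cite[\S~5]{BK}, which transfers once $GL_n(R)$ is replaced by $GL_n(R^{op})$.

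Assembling the induction yields
\[
(\Omega^n_n)^{ab}\cong(GL_n(R))^{ab}\oplus\bigoplus_{m=1}^{n-1}\big((GL_m(R))^{ab}\oplus\mathbb Z_2\big)\oplus\mathbb Z_2,
\]
where the final $\mathbb Z_2$ comes from $\Omega^n_0$ and the leading term from $\Upsilon^n$; reindexing by absorbing the trailing $\mathbb Z_2$ as the $i=0$ term (using that $GL_0(R)$ is trivial) gives the stated first isomorphism. The second isomorphism then follows from the Dieudonn\'e determinant theorem, $(GL_i(R))^{ab}\cong R^\times/[R^\times,R^\times]$ for each $i\geq 1$, valid since $R$ is a division ring with $|R|\neq 2$.
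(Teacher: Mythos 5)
Your proposal is correct and takes essentially the same approach as the paper: the paper's proof of the first isomorphism simply cites \cite[Proposition~5.5]{BK} verbatim (with $GL_n(R)$ replaced by $GL_n(R^{op})\cong GL_n(R)^{op}$) and then applies Dieudonn\'e's theorem, and your iterated semidirect-product abelianization (vanishing of the coinvariants of $M^m$ via a unit $\lambda\neq 1$, which is where $|R|\neq 2$ enters, wreath-product abelianizations supplying the $\mathbb Z_2$ summands, and triviality of the induced conjugation actions on abelianizations) is exactly the argument that citation encapsulates. You merely make explicit the bookkeeping that the paper delegates to the reference.
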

\begin{proof}
The proof of the first isomorphism follows that of \cite[Proposition~5.5]{BK} verbatim since $GL_n(R^{op})\cong GL_n(R)^{op}$. For the second isomorphism, we use a result of Dieudonn\'{e} from 1943 where he explicitly proved that $(GL_n(R))^{ab} \cong \frac{R^\times}{[R^\times,R^\times]}$ for all $n \geq 1$ (except for $n =2$, when $|R| = 2$) \cite[III.1.2.4]{Weibel}.
\end{proof}
Recall from Theorem \ref{K1Bass} and Remark \ref{ctblcofinal} that $K_1(M_R)\cong\varinjlim_{n\in \mathbb N}(\Omega^n_n)^{ab}$, and hence the proof of thsi case of Theorem \ref{K1Final} is complete thanks to the above proposition.

\begin{rmk}\label{step12}
If $R$ is a von Neumann regular ring and $M_R$ is an infinite right $R$-module such that the theory of $M_R$ is closed under products and $\overline{\mathcal{X}}^*(M_R) \cong \mathbb{N}$, then Steps I and II of the proof of Theorem \ref{K1Final} follow verbatim.
\end{rmk}

Let us recall the computation of $K_1^\oplus(R)$ from algebraic K-theory.
\begin{thm}\cite[III.1.2.4]{Weibel}
If $R$ is a division ring, then $K_1^\oplus(R)\cong\frac{R^\times}{[R\times,R^\times]}$.
\end{thm}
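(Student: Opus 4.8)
The plan is to assemble this from two ingredients: the identification of $K_1^\oplus(R)$ with a colimit of abelianized general linear groups, and the Dieudonné determinant, the latter being exactly the tool already invoked in the proposition above. First I would observe that the symmetric monoidal groupoid $(\mathrm{Proj}(R),\oplus,0)$ of finitely generated projective right $R$-modules also has faithful translations, so Theorem \ref{K1Bass} applies and $K_1^\oplus(R)\cong\varinjlim_{P}\Aut(P)^{ab}$. Since $R$ is a division ring, every finitely generated projective right module is free of a well-defined rank by Proposition \ref{dvprop}(1); hence the objects $R^1,R^2,\dots$ form a cofinal sequence with $\Aut(R^n)=GL_n(R)$, and Remark \ref{ctblcofinal} yields
\[
K_1^\oplus(R)\cong\varinjlim_{n}GL_n(R)^{ab},
\]
the colimit taken along the upper-left-block stabilization maps $GL_n(R)\hookrightarrow GL_{n+1}(R)$ induced by $A\mapsto\mathrm{diag}(A,1)$.

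Second, I would invoke Dieudonné's $1943$ theorem (\cite[III.1.2.4]{Weibel}), already used in the preceding proposition, which furnishes for each $n\geq1$ a surjective homomorphism $\det_n\colon GL_n(R)\to R^\times/[R^\times,R^\times]$ whose kernel is the elementary (equivalently, commutator) subgroup $E_n(R)=[GL_n(R),GL_n(R)]$, and therefore an isomorphism $GL_n(R)^{ab}\cong R^\times/[R^\times,R^\times]$.

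The one genuine verification is compatibility with stabilization: since $\det_{n+1}(\mathrm{diag}(A,1))=\det_n(A)$, the transition maps $GL_n(R)^{ab}\to GL_{n+1}(R)^{ab}$ become the identity of $R^\times/[R^\times,R^\times]$ under the Dieudonné isomorphisms, so the colimit collapses to a single copy of $R^\times/[R^\times,R^\times]$, giving the claim. The sole exceptional level is $n=2$ with $R=\mathbb F_2$, where $GL_2(\mathbb F_2)^{ab}\cong\mathbb Z_2$ disagrees with $R^\times/[R^\times,R^\times]=0$; but $GL_n(\mathbb F_2)$ is perfect for $n\geq3$, so this discrepancy is annihilated in the colimit, and in any case $\mathbb F_2$ is finite, hence outside the range of infinite division rings of interest. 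The \emph{main obstacle} is concentrated entirely inside Dieudonné's theorem: constructing a well-defined multiplicative determinant valued in the abelianization $R^\times/[R^\times,R^\times]$ for a noncommutative $R$ and identifying its kernel with $E_n(R)$. Since the paper treats this as a citation, the remaining work is only the stabilization bookkeeping above, which is routine.
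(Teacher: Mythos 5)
Your proposal is correct and is essentially the paper's approach: the paper offers no proof here at all --- the statement is a pure citation of Dieudonn\'{e}'s theorem via Weibel [III.1.2.4] --- and your argument (Bass's colimit formula applied to the cofinal sequence $R^1, R^2,\dots$ of free modules, the Dieudonn\'{e} determinant, and the stabilization bookkeeping, with the $\mathbb{F}_2$ exception dying in the colimit since $GL_n(\mathbb{F}_2)$ is perfect for $n\geq 3$) is precisely the standard unwinding of that citation, with the genuinely hard ingredient still deferred to Dieudonn\'{e}, exactly as the paper does. One cosmetic slip worth noting: for $n=1$ the kernel of the determinant is $[R^\times,R^\times]$, which is the commutator subgroup of $GL_1(R)$ but not the elementary subgroup $E_1(R)=\{1\}$, so the parenthetical ``elementary (equivalently, commutator)'' fails at that level; this does not affect your conclusion, since $GL_1(R)^{ab}\cong R^\times/[R^\times,R^\times]$ holds trivially and the colimit only depends on large $n$.
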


There is a beautiful connection between algebraic and model-theoretic $K_1$-groups whose proof follows that of \cite[Theorem~9.1]{BK} verbatim.
\begin{thm}\label{algmoddiv}
For an infinite division ring $R$, there is a natural embedding of $K_1^\oplus(R)$ into $K_1(R_R)$ induced by the inclusion functor $\mathrm{Free}(R)\to\C(R_R)$ (thanks to Proposition \ref{funR}), where $\mathrm{Free}(R)$ is the full subcategory of $\mathrm{Mod}\mbox{-}R$ consisting of finitely generated free modules.
\end{thm}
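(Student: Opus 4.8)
The plan is to compare the two groups through Bass's colimit description (Theorem \ref{K1Bass}) of each, using the functor $\iota:\mathrm{Free}(R)\to\C(R_R)$, $R^n\mapsto M^n$, as the bridge. First I would record that $\iota$ is well-defined on morphisms: by Proposition \ref{funR} every $R$-linear isomorphism of $R_R^n$ is a $pp$-definable (indeed linear) bijection of $M^n$, so $\iota$ restricts on automorphism groups to injective homomorphisms $\iota_n:\mathrm{Aut}_{\mathrm{Free}(R)}(R^n)=GL_n(R^{op})\hookrightarrow\Omega^n_n$ whose image lies in the linear part $\Upsilon^n=GL_n(R^{op})\ltimes M^n$ of $\Omega^n_n$, with trivial translation component.

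On the algebraic side I would use that $K_1^\oplus(R)=\varinjlim_n GL_n(R)^{ab}$ and that, for a division ring, Dieudonn\'e's theorem makes every stabilization map $GL_n(R)^{ab}\to GL_{n+1}(R)^{ab}$ an isomorphism onto $R^\times/[R^\times,R^\times]$; hence $K_1^\oplus(R)\cong R^\times/[R^\times,R^\times]$ already at level $n=1$. On the model-theoretic side, the Step~III computation exhibits $GL_n(R)^{ab}$ as a direct summand of $(\Omega^n_n)^{ab}$, namely the summand coming from the $\Upsilon^n$-factor, and $\iota_n^{ab}$ identifies $GL_n(R^{op})^{ab}\cong GL_n(R)^{ab}$ with this leading summand. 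Composing with the structure map into the colimit $K_1(R_R)=\varinjlim_n(\Omega^n_n)^{ab}$ then produces the homomorphism $\iota_\ast:K_1^\oplus(R)\to K_1(R_R)$, and tracing the summand through a single transition map $g\mapsto g\sqcup\mathrm{id}_{a_n}$ shows its image is one of the direct summands $\cong R^\times/[R^\times,R^\times]$ of $K_1(R_R)$ produced in Theorem \ref{K1Final}. Injectivity then follows because $\iota_n^{ab}$ is a split injection onto a direct summand which, under the transition maps of $\C$, maps isomorphically onto a fixed direct summand of $K_1(R_R)$, so no nontrivial class can die in the colimit; naturality in $R$ is inherited from that of $\iota$, of Bass's colimit, and of the splitting.

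The main obstacle is that $\iota$ is \emph{not} a symmetric monoidal functor: the two monoidal products disagree, since $M^m\sqcup M^n$ and $M^{m+n}=\iota(R^m\oplus R^n)$ are not definably isomorphic (they carry the distinct classes $X^m+X^n\neq X^{m+n}$ in $K_0(M_R)$). Consequently one cannot simply invoke monoidal functoriality of $K_1$ to obtain $\iota_\ast$; the induced map must be built by hand on Bass's colimits, and the square relating the two towers of stabilization maps fails to commute on the nose, since the algebraic transition $f\mapsto f\oplus\mathrm{id}_R$ lands in the leading summand at level $n+1$, whereas the $\C$-transition carries the level-$n$ leading summand into the wreath-product summand indexed by $n$. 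The crux is therefore to reconcile these two behaviors after passing to the colimit; for this, the iso-stabilization on the algebraic side (so that a single level already realizes $K_1^\oplus(R)$) together with the explicit split-summand structure extracted in Step~III is exactly what is needed, and this is where I expect the bookkeeping from \cite[Theorem~9.1]{BK} to be reused essentially verbatim.
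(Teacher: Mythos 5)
Your proposal is correct and, in substance, it is the paper's own route: the paper's ``proof'' is just a citation of \cite[Theorem~9.1]{BK}, and the mechanism you reconstruct --- Bass's colimit (Theorem \ref{K1Bass}) on both sides, the observation via Proposition \ref{funR} that $\mathrm{Aut}_{\mathrm{Free}(R)}(R^n)=GL_n(R^{op})$ lands inside $\Upsilon^n$ with trivial translation part, the fact (Step III together with Lemma \ref{quotfree}) that this inclusion abelianizes to a split injection onto the leading summand of $(\Omega^n_n)^{ab}$, Dieudonn\'e's theorem to collapse the algebraic tower, and injectivity because the leading summand at level $n$ passes isomorphically under $f\mapsto f\sqcup\mathrm{id}$ into the wreath summand indexed by $n$ at all later levels --- is exactly the infrastructure the paper builds in \S~4 and invokes in Remark \ref{algmoddivrmk}.

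The place where your write-up goes beyond what the paper records is the obstacle you isolate in your final paragraph, and you have it exactly right: the two stabilizations are incompatible ($f\mapsto f\oplus\mathrm{id}_R$ lands in the leading summand at level $n+1$, while $f\mapsto f\sqcup\mathrm{id}$ lands in the $i=n$ wreath summand), so the level-wise maps $GL_n(R)^{ab}\to K_1(R_R)$ do \emph{not} assemble over the algebraic colimit, and the only coherent definition is the one you give: fix a single level, use Dieudonn\'e to identify $K_1^\oplus(R)$ with $GL_n(R)^{ab}$ at that level, and embed onto the resulting fixed summand. Two consequences are worth stating explicitly, since they sharpen both your proof and the paper. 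First, the embedding so constructed depends on the chosen level (choosing level $n$ gives the $n$-th summand of $K_1(R_R)$ as image), so ``natural'' and ``induced by the inclusion functor'' in the statement can only be meant in this level-fixed sense; your proof should say which level it fixes. Second, the paper's own Remark \ref{algmoddivrmk}, which describes the composite $GL(R)\to K_1(R_R)$ by sending an $A\in GL_n(R)$ that is new at level $n$ to $\det(A)$ in the leading term of $(\Omega^n_n)^{ab}$, is level-dependent and, read literally, is not a group homomorphism: for $a\in R^\times$ with nontrivial class, the images of $a\in GL_1(R)$, of a unipotent $B\in GL_2(R)$ new at level $2$, and of their product $aB$ (new at level $2$) land in incompatible summands, namely $[a]$ in the first summand versus $[a]$ in the second. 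So your fixed-level formulation is not merely one option among several; it is the necessary repair, and it is what actually proves the theorem.
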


\begin{rmk}\label{algmoddivrmk}
Suppose $GL(R):=\varinjlim_{n\in\mathbb N} GL_n(R)$ for an infinite division ring $R$. Then the composition $GL(R) \twoheadrightarrow K_1^\oplus(R)\hookrightarrow K_1(R_R)$ can be described as follows: if $A \in GL_n(R)$ is not in the image of the natural embedding of $GL_{n-1}(R)$ into $GL_n(R)$, then it maps to $\det(A)\in(GL_n(R))^{ab}$, where $(GL_n(R))^{ab}$ is the leading term of $(\Omega^n_n)^{ab}$.
\end{rmk}

\section{$K_1$ of $M_q(R)$-modules for a division ring $R$}\label{K1matdiv}
The main goal of this short section is to prove Theorem \ref{K1Final} in its full generality by computing $K_1(M_{M_q(R)})$ when $M_q(R)$ is the matrix ring over a division ring $R$, $q\geq 1$, and $|M_q(R)|\neq2$. 
% \begin{thm}(Weak Morita invariance of model-theoretic $K_1$)\label{K1Morita}
% Let $R$ be a division ring and $q\geq 1$, except $R= \mathbb F_2$ and $q =1$, and $M$ be an $M_q(R)$-module. Then $K_1({M}_{M_q(R)}) \cong K_1(M_R)$.
% \end{thm}

The proof is divided into three steps similar to the proof in the section above. Thanks to the last sentence of Remark \ref{chad} and Example \ref{finitestrKtheory}, we assume that $M$ is infinite. We argued in Theorem \ref{matdiv} that $\overline{\mathcal{X}}^*(M_{M_q(R)})\cong q\mathbb N\cong \mathbb N$, and noted in Proposition \ref{TequalsTaleph0}(2) that the theory of $M_{M_q(R)}$ is closed under products. Therefore, Remark \ref{step12} yields that we only need to deal with Step III.

The next result computes $(\Upsilon^n(M_{M_q(R)}))^{ab}$ for most values of $q$ and $n$, and it's proof is essentially that of \cite[Lemma~7.1]{BK}--the latter result is stated for commutative rings but its proof only uses elementary matrices and does not depend on the commutativity of the ring.
\begin{lem}\label{quotfree}
Suppose $R$ is a unital ring and $M_R$ is a right $R$-module. Then for each $q\geq2$ we have $(GL_q(R)^{op} \ltimes M^q)^{ab} \cong (GL_q(R))^{ab}$. Moreover, if the multiplicative identity $1$ in $R$ can be written as a sum of two units, then the conclusion also holds true for $q=1$.
\end{lem}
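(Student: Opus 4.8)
The plan is to exploit the semidirect-product structure of $G := GL_q(R)^{op}\ltimes M^q$, in which the normal subgroup $N := M^q$ is abelian and the complement $H := GL_q(R)^{op}$ acts by right matrix multiplication. For any semidirect product $H\ltimes N$ with $N$ abelian there is a natural isomorphism $(H\ltimes N)^{ab}\cong H^{ab}\oplus N_H$, where $N_H := N/\langle v-v\cdot A : v\in N,\ A\in H\rangle$ is the group of coinvariants. This is verified by checking that $(A,v)\mapsto(\overline A,\overline v)$ is a surjective homomorphism of $G$ onto $H^{ab}\oplus N_H$ whose kernel is exactly $[G,G]$; the only point requiring care is that the mixed commutators $[(A,0),(I,v)]$ generate precisely the augmentation subgroup $\langle v-v\cdot A\rangle$, while $[H,H]$ accounts for the rest. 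Since the inversion map $g\mapsto g^{-1}$ gives $GL_q(R)^{op}\cong GL_q(R)$ as groups, we have $H^{ab}\cong(GL_q(R))^{ab}$, so it suffices to prove that $N_H=0$, i.e.\ that every $v\in M^q$ lies in the subgroup generated by the elements $v-v\cdot A$.

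For $q\geq 2$ I would compute $N_H$ using only elementary matrices, exactly along the lines of \cite[Lemma~7.1]{BK} (whose argument uses no commutativity). Fix $m\in M$ and let $E := I+e_{12}\in GL_q(R)$ be the transvection, which is invertible with inverse $I-e_{12}$. Applying the defining relation of $N_H$ to $v=(m,0,\dots,0)$ gives $v\cdot E=(m,m,0,\dots,0)$, hence $v-v\cdot E=(0,-m,0,\dots,0)\equiv 0$ in $N_H$; as $m$ is arbitrary, every vector supported in the second coordinate is trivial in $N_H$. Conjugating by permutation matrices (which lie in $GL_q(R)$ precisely because $q\geq 2$) transports this to every coordinate, and since an arbitrary element of $M^q$ is a sum of coordinate vectors, $N_H=0$. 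Therefore $G^{ab}\cong(GL_q(R))^{ab}$.

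For $q=1$ the transvection is unavailable, which is exactly why the extra hypothesis enters. Here $H=GL_1(R)^{op}=(R^\times)^{op}$ acts on $M$ by right multiplication and $N_H=M/\langle m-mu : m\in M,\ u\in R^\times\rangle$. Writing $1=u_1+u_2$ with $u_1,u_2\in R^\times$ and using $mu_i\equiv m$, one gets $m=m(u_1+u_2)=mu_1+mu_2\equiv 2m$, whence $m\equiv 0$ and $N_H=0$; the necessity of the hypothesis is visible from $R=\mathbb{F}_2$, where $R^\times=\{1\}$ acts trivially and $N_H=M\neq 0$. I expect the main obstacle to be bookkeeping rather than any hard computation: the delicate steps are confirming that the mixed commutators generate exactly the augmentation subgroup in the abelianization formula, and keeping the left/right and opposite-ring conventions consistent so that the identification $H^{ab}\cong(GL_q(R))^{ab}$ is legitimate. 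Once those are settled, the elementary-matrix and two-unit arguments dispatch both cases immediately.
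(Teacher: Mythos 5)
Your proposal is correct and takes essentially the same route as the paper, which simply defers to \cite[Lemma~7.1]{BK}: the standard semidirect-product abelianization $(H\ltimes N)^{ab}\cong H^{ab}\oplus N_H$ for abelian $N$, an elementary-matrix (transvection plus permutation) computation showing the coinvariants vanish when $q\geq 2$, and the sum-of-two-units trick for $q=1$, none of which uses commutativity. Your handling of the opposite-group identification $(GL_q(R)^{op})^{ab}\cong(GL_q(R))^{ab}$ via inversion and your $\mathbb{F}_2$ example showing the necessity of the extra hypothesis are both correct supplements.
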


The hypothesis of the above lemma fails when $|R|=2$ and $q=1$. For all other cases, combining Dieudonne's result and Lemma \ref{quotfree} with the fact that $GL_k(M_q(R)) \cong GL_{kq}(R)$ for all $k,q \in \mathbb N$, we can follow the computations in Step III of \S~5 to conclude the proof of Theorem \ref{K1Final}.

\begin{rmk}\label{K1weakMorita}
The isomorphism $K_1(R_R)\cong K_1((M_q(R))_{(M_q(R))})$ for each division ring $R$ stated in Corollary\ref{K1Morita} could be interpreted as weak Morita invariance of model-theoretic $K_1$, and not the usual Morita invariance as seen in its algebraic counterpart \cite[Example~III.1.1.4]{Weibel}--this property is shared with model-theoretic $K_0$ as explained in Remark \ref{K0weakMorita}. When $R$ is infinite, we overcome the difference between the sets of parameters for different rings thanks to Lemma \ref{quotfree} as well as the use wreath products with finitary permutation groups in Step II of the proof via Proposition \ref{autinftydim}.
\end{rmk}

% Now we focus on the case where $R = \mathbb F_q$, a finite field of order $q \neq 2$. The reason that we compute this separately is because for finite fields we have $T \neq T^{\aleph_0}$. But 
% For the $T \neq T^{\aleph_0}$ case we observe that for $M_{M_n(R)}$, a free right $M_n(R)$-module the following holds verbatim
% \begin{prop}\label{Upsi}
%     Let $\mathbf G_k$ be the set of all $pp$-definable finite-index subgroups of $M^k_{M_n(R)}$. Then $(\mathbf G_k,\leq)$ is a poset under the subgroup relation in such a way that $(\mathbf G_k,\leq)^{op}$ is a directed set. Then 
%     \begin{eqnarray*}
%     \Upsilon^n(M_{M_n(R)})&\cong&\varinjlim_{G \in (\mathbf G_k,\leq)^{op}} (\mathrm{Aut}_{\mathcal{L}}(G)\wr \FS(M^n/G))\\
%     &\cong&\varinjlim_{G \in (\mathbf G_k,\leq)^{op}} \left((GL_n(R)^{op}\ltimes M^n)\wr \FS(M^n/G)\right).  
%     \end{eqnarray*}
% \end{prop}
% For a detailed discussion on Proposition \ref{Upsi} we refer to \cite[\S~6]{BK}.
% If $R$ is any finite field of order $\neq 2$ then $M_n(R)$ will always be such a ring that $1=u+v$ for some units $u,v \in GL_n(R)$. Hence, as showed in \cite[Theorem~7.8]{BK}, here using similar steps along with Lemma \ref{quotfree}, Proposition \ref{Upsi}, and observing $GL_k(M_n(R)) = GL_{kn}(R)$ we arrive at the following conclusion

% \begin{thm}\label{TneqT0}
% Let $R$ be a finite field of order $\neq 2$. Suppose that the theory $T$ of $M_R$ satisfies $T \neq T^{\aleph_0}$. If $(\mathbf G_1,\leq)^{op}$ contains a cofinal system of even-indexed subgroups of $M$ then $$K_1(M_{M_n(R)})\cong K_1(M_R) $$
% \end{thm}

\section{$K_1$ of modules over semisimple rings}\label{K1ss}
Throughout this section, $S$ will denote a unital semisimple ring. Thanks to the Wedderburn-Artin theorem (Theorem \ref{smismple}(4)), we have $S\cong\prod_{i=1}^kS_i$, where $S_i:=M_{q_i}(R_i)$ for some division ring $R_i$ and $q_i\geq 1$. Let $e_i$ be the idempotent such that $Se_i=M_{q_i}(R_i)$. 
\begin{rmk}\label{composemi}
Theorem \ref{smismple}(2) yields that a right $S$-module $M_S$ can be written as $M\cong\prod_{i=1}^k M_i$, where $M_i:=Me_i$ is a right $S_i$-module. As a consequence, the category of $\mathrm{Mod}$-$S=\mathrm{Mod}$-$(\prod_{i=1}^k S_i)$ is equivalent to the category of $\prod_{i=1}^k(\mathrm{Mod}$-$S_i)$.
\end{rmk}

Assume that each $M_i$ is infinite. The main goal of this section is to prove Theorem \ref{K1semisimple}. As in \S~\ref{K1div}, the proof of this theorem is along lines similar to the computation of $K_1$ of infinite vector spaces \cite[\S~5]{BK}, and we only focus on Steps I and II. 

% compute $K_1(M_S)$ for an $S$-module $M_S$ (Theorem \ref{K1semisimple}).
% \begin{thm}\label{K1semisimple}
% Let $S:=\prod_{i=1}^k S_i$ be a unital semisimple ring, where for each $i$, $S_i=M_{q_i}(R_i)$ for a division ring $R_i$ and $q_i\geq 1$. Write a right $S$-module $M_S$ as $\prod_{i =1}^k{(M_i)}_{M_{q_i}(R_i)}$, where $M_i:=Me_i$ is a right $M_{q_i}(R_i)$-module. Assume that each $M_i$ is infinite. Then $$K_1(M_S)  \cong \prod_{i=1}^kK_1\left({(M_i)}_{M_{q_i}(R_i)}\right).$$ 
% \end{thm}

\begin{rmk}\label{theoryclosedss}
Since each $M_i$ is infinite, Proposition \ref{TequalsTaleph0} and Remark \ref{chad} together yield that the theory of the module $M_S$ is closed under products.
\end{rmk}

Combining Remark \ref{composemi} and Theorem \ref{matdiv}, we obtain the following.
\begin{thm}\label{ppsemi}
Using the notations and hypotheses of Theorem \ref{K1semisimple}, $\overline{\mathcal{X}}^*(M_S) \cong \prod_{i=1}^kq_i\mathbb N \cong \mathbb N^k$.
\end{thm}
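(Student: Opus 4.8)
The plan is to establish the isomorphism $\overline{\mathcal{X}}^*(M_S) \cong \prod_{i=1}^k q_i\mathbb{N}$ by tracking how $pp$-definable bijection classes decompose across the product structure of $S$. First I would invoke Remark \ref{composemi}, which realizes $M_S$ as $\prod_{i=1}^k M_i$ with each $M_i$ a right $S_i$-module, and observe that the language $L_S$ decomposes compatibly: each scalar $r = (r_1,\ldots,r_k) \in S = \prod_i S_i$ acts componentwise, and the idempotents $e_i$ are themselves definable scalars that project $M$ onto $M_i$. The key structural point is that, because multiplication by $e_i$ is a $pp$-definable idempotent endomorphism of $M^n$, any $pp$-definable subset $A \subseteq M^n$ is determined by its components $Ae_i \subseteq M_i^n$, and a $pp$-definable bijection between two such sets must respect this decomposition since it commutes with the action of each $e_i$.

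The central step is to prove that the monoid $\overline{\mathcal{X}}^*(M_S)$ factors as the product $\prod_{i=1}^k \overline{\mathcal{X}}^*(M_i)_{S_i}$ of the colour monoids of the factors. I would argue that the assignment sending a $pp$-definable set $A$ to the tuple of its components $(Ae_1,\ldots,Ae_k)$ descends to a well-defined monoid homomorphism on colours: two sets are $pp$-definably bijective over $S$ if and only if their corresponding components are $pp$-definably bijective over each $S_i$. The forward direction uses that a $pp$-bijection of $M$-sets restricts to $pp$-bijections of the $e_i$-components (applying $\cdot_{e_i}$ to the defining $pp$-formula of the graph); the reverse direction glues componentwise bijections using the orthogonal decomposition $1 = \sum_i e_i$ and the fact that a conjunction of $pp$-formulas is again $pp$. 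This gives the monoid isomorphism $\overline{\mathcal{X}}^*(M_S) \cong \prod_{i=1}^k \overline{\mathcal{X}}^*((M_i)_{S_i})$.

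Once the factorization is in place, I would finish by applying Theorem \ref{matdiv} to each factor: since each $M_i$ is an infinite $M_{q_i}(R_i)$-module over the division ring $R_i$, we have $\overline{\mathcal{X}}^*((M_i)_{S_i}) \cong q_i\mathbb{N}$. Taking the product over $i$ yields $\overline{\mathcal{X}}^*(M_S) \cong \prod_{i=1}^k q_i\mathbb{N}$, and since each $q_i\mathbb{N} \cong \mathbb{N}$ as a monoid under addition, the right-hand side is isomorphic to $\mathbb{N}^k$.

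The main obstacle I anticipate is rigorously justifying that the colour monoid of a product genuinely splits as a product of colour monoids, specifically the claim that $pp$-definable bijections over $S$ are exactly componentwise tuples of $pp$-definable bijections over the $S_i$. Care is needed because a $pp$-formula over $S$ could a priori "mix" the components in ways not visible over a single factor; the resolution relies on the orthogonal idempotents $e_i$ being definable scalars, so that projection onto each component is itself a $pp$-definable operation, forcing every $pp$-definable object and morphism to respect the decomposition. I would spell out this componentwise decomposition of $pp$-formulas as the technical heart of the argument, after which the monoid isomorphism and the reduction to Theorem \ref{matdiv} are routine.
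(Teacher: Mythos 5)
Your proposal is correct and takes essentially the same approach as the paper: the paper's entire proof is the one-line statement ``Combining Remark \ref{composemi} and Theorem \ref{matdiv}, we obtain the following,'' which is precisely your decomposition of $M_S$ via the central idempotents $e_i$ followed by the componentwise application of Theorem \ref{matdiv}. The splitting of $pp$-definable sets and bijections across the factors, which you rightly identify as the technical heart (your phrase ``commutes with the action of each $e_i$'' is loose, since $pp$-bijections are affine rather than linear, but your graph-decomposition argument is the correct fix), is left implicit in the paper, so your write-up simply makes the paper's argument explicit.
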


\begin{cor}[Model-theoretic $K_0$ commutes with products]
Using the notations and hypotheses of Theorem \ref{K1semisimple}, we have $K_0({M}_S)\cong K_0(M_{S_1})\otimes_\mathbb Z\cdots\otimes_\mathbb Z K_0(M_{S_k})\cong\mathbb Z[X_1,\cdots,X_k]$.
\end{cor}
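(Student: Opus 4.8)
The plan is to compute $K_0(M_S)$ directly from the structural results already in place and to verify that the answer coincides with the stated tensor product and polynomial ring. First I would invoke Remark~\ref{theoryclosedss}, which guarantees that the theory of $M_S$ is closed under products; this is exactly the hypothesis needed to apply Theorem~\ref{GrothRing}, so that $K_0(M_S)\cong\mathbb Z[\overline{\mathcal{X}}^*(M_S)]$, the monoid ring of the monoid of non-trivial colours. The analogous statements hold for each factor $M_{S_i}$, giving $K_0(M_{S_i})\cong\mathbb Z[\overline{\mathcal{X}}^*(M_{S_i})]$, and by Theorem~\ref{matdiv} each $\overline{\mathcal{X}}^*(M_{S_i})\cong q_i\mathbb N\cong\mathbb N$.

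Next I would feed in Theorem~\ref{ppsemi}, which already identifies the colour monoid of the product as $\overline{\mathcal{X}}^*(M_S)\cong\prod_{i=1}^k q_i\mathbb N\cong\mathbb N^k$. The key observation is that the monoid ring functor $\mathbb Z[-]$ sends a finite product (direct sum) of commutative monoids to the tensor product of the corresponding monoid rings: for monoids $N_1,\dots,N_k$ one has the canonical isomorphism $\mathbb Z[N_1\times\cdots\times N_k]\cong\mathbb Z[N_1]\otimes_{\mathbb Z}\cdots\otimes_{\mathbb Z}\mathbb Z[N_k]$, obtained by sending a generator indexed by $(n_1,\dots,n_k)$ to the elementary tensor of the generators indexed by the $n_i$. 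Applying this with $N_i=\overline{\mathcal{X}}^*(M_{S_i})$ yields
\begin{equation*}
K_0(M_S)\cong\mathbb Z\!\left[\prod_{i=1}^k\overline{\mathcal{X}}^*(M_{S_i})\right]\cong\bigotimes_{i=1}^k\mathbb Z[\overline{\mathcal{X}}^*(M_{S_i})]\cong\bigotimes_{i=1}^k K_0(M_{S_i}).
\end{equation*}

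Finally, to obtain the polynomial ring description, I would use $\overline{\mathcal{X}}^*(M_{S_i})\cong\mathbb N$ from Theorem~\ref{matdiv} to identify each factor $\mathbb Z[\overline{\mathcal{X}}^*(M_{S_i})]\cong\mathbb Z[\mathbb N]\cong\mathbb Z[X_i]$, so that the $k$-fold tensor product over $\mathbb Z$ becomes $\mathbb Z[X_1,\dots,X_k]$, exactly the claimed form. The only genuine point requiring care — and the step I expect to be the main obstacle, modest as it is — is making the monoid-ring/tensor-product compatibility precise: one must check that the colour monoid of the product is really the direct product of the colour monoids (supplied by Theorem~\ref{ppsemi}) and that the multiplication on colours induced by Cartesian product matches the componentwise multiplication under this identification, so that the isomorphism is one of rings rather than merely of abelian groups. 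Since Theorem~\ref{ppsemi} is stated as a monoid isomorphism and the monoid-ring functor is symmetric monoidal, this compatibility is essentially formal, and the corollary follows.
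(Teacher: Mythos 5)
Your proposal is correct and follows exactly the route the paper intends: the corollary is an immediate consequence of Remark \ref{theoryclosedss} (closure under products), Theorem \ref{GrothRing} (the monoid-ring description of $K_0$), and Theorem \ref{ppsemi} ($\overline{\mathcal{X}}^*(M_S)\cong\mathbb N^k$), combined with the standard identification $\mathbb Z[N_1\times\cdots\times N_k]\cong\mathbb Z[N_1]\otimes_{\mathbb Z}\cdots\otimes_{\mathbb Z}\mathbb Z[N_k]$ and $\mathbb Z[\mathbb N]\cong\mathbb Z[X_i]$. Your closing care about the ring (rather than merely additive) structure is exactly the right point to check, and it is indeed settled by the fact that both Theorem \ref{GrothRing} and Theorem \ref{ppsemi} respect the multiplication induced by Cartesian product.
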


The monoid $\mathbb N^k$ is naturally equipped with a partial order where $\overline m\leq\overline m'$ if $m_i\leq m'_i$ for each $1\leq i\leq k$. Let $\overline 1:=(1,1\cdots,1)\in\mathbb N^k$ and $m\cdot\overline 1$ for the constant tuple $(m,\cdots,m)\in\mathbb N^k$.

Set $\C:= \C(M_S)$ and $\C_i:= \C({(M_i)}_{S_i})$ for every $1\leq i\leq k$. Let $\pi_i:\C\to\C_i$ be the natural projection functors. For $E\in\C_i$, let $\dim_i(E)$ denote $\dim_{\C_i}(E)$ as defined in the previous sections. The above theorem forces us to assign a ``dimension vector" to definable sets in $\C.$

\begin{defn}\label{dimvec}
    For $E \in \C$, define $\dim_\C(E) :=\begin{cases}-\infty&\text{if }E=\emptyset;\\ 
    (\dim_i(\pi_i(E)))_{i=1}^k&\text{otherwise.}\end{cases}$
\end{defn}
For simplicity, we denote $\dim_\C(E)$ by $\dim(E)$ and write its $i^{th}$ component as $(\dim(E))_i$. The definition of dimension of automorphisms remains the same as in Definition \ref{supp}. For a nonempty $E\in\C$ and $\overline m \in \mathbb N^k$, let $\Omega_{\overline m}(E):=\{f\in\Aut_{\C}(E):\dim(f)\leq \overline m\}$ be the subgroup of $\Aut_{\C}(E)$ of automorphisms fixing all elements of $E$ outside a subset of dimension at most $\overline m$.

We wish to prove the following vector analogue of Proposition \ref{autinftydim} and \cite[Proposition~5.4]{BK}.
\begin{prop}\label{autinftydimss}
If $\overline{m}\in\mathbb N^k$, $E_1,E_2\in\C$ and $\dim(E_l)\geq\overline m+\overline 1$ for $l=1,2$, then $\Omega_{\overline m}(E_1)\cong\Omega_{\overline m}(E_2)$.
\end{prop}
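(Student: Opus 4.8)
The plan is to adapt the proof of the scalar statement \cite[Proposition~5.4]{BK} (i.e.\ Proposition~\ref{autinftydim}) to the dimension-vector setting, the essential new ingredient being a coordinatewise embedding lemma. First I would record the structural facts that make the argument run. If $f\in\Aut_\C(E)$ has $\dim(f)\le\overline m$, then $f$ fixes $E\setminus\operatorname{Supp}(f)$ pointwise, and since $f$ is a bijection it restricts to an automorphism of $D:=\operatorname{Supp}(f)$; conversely any $g\in\Aut_\C(D)$ with $D\subseteq E$ and $\dim(D)\le\overline m$ extends to $g\sqcup\id_{E\setminus D}\in\Omega_{\overline m}(E)$, this extension being injective because translations are faithful (Remark~\ref{trfaith}). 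Moreover, for definable $D_1,D_2\subseteq E$ one has $\pi_i(D_1\cup D_2)=\pi_i(D_1)\cup\pi_i(D_2)$, so $\dim(D_1\cup D_2)$ is the coordinatewise maximum of $\dim(D_1)$ and $\dim(D_2)$; hence the definable subsets of $E$ of dimension $\le\overline m$ form a directed poset $\mathcal D_E$ under inclusion. Combining these observations yields the identification
\[
\Omega_{\overline m}(E)\;\cong\;\varinjlim_{D\in\mathcal D_E}\Aut_\C(D),
\]
with transition maps given by extension by the identity.

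The heart of the matter is the following embedding lemma, the vector analogue of the fact used in \cite[Proposition~5.4]{BK}: if $D,E\in\C$ satisfy $\dim(D)\le\overline m$ and $\dim(E)\ge\overline m+\overline 1$, then there is a definable injection $D\hookrightarrow E$. I would prove this using the product decomposition $\mathrm{Mod}\text{-}S\simeq\prod_{i=1}^k\mathrm{Mod}\text{-}S_i$ (Remark~\ref{composemi}) together with Theorem~\ref{ppsemi}, which identifies $\overline{\mathcal X}^*(M_S)$ with $\mathbb N^k$ and hence, via Theorem~\ref{GrothRing}, $K_0(M_S)$ with $\mathbb Z[X_1,\dots,X_k]$. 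Writing $\overline m=(m_1,\dots,m_k)$, coordinatewise we have $\dim_i\pi_i(E)\ge m_i+1>m_i\ge\dim_i\pi_i(D)$; in each factor category $\C_i$ (a module over $M_{q_i}(R_i)$ with $\overline{\mathcal X}^*\cong\mathbb N$) a strictly larger dimension provides arbitrarily many pairwise disjoint cosets of every colour of index $\le m_i$, so $\pi_i(D)$ admits a definable injection into $\pi_i(E)$.

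The remaining work is to assemble these factorwise injections into a single definable injection in $\C$, and this is the main obstacle, since a definable set of $\C$ is not literally the product of its projections. For this I would put $D$ into a normal form as a finite disjoint union of ``rectangles'' $\prod_{i}D_i$ (products of definable subsets of the factors), which is possible because every $pp$-definable subgroup of $M^n$ splits as a product over $i$ and definable sets are Boolean combinations of cosets of such subgroups; I would then embed each rectangle factorwise and use the slack afforded by the strict inequality $\dim(E)\ge\overline m+\overline 1$ to keep the images of distinct rectangles disjoint. Turning compatible coordinatewise embeddings into one genuinely definable injection, while bookkeeping disjointness across cells uniformly, is where the argument is delicate.

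Finally I would deduce the proposition from the colimit description together with the embedding lemma. Fix $E_1,E_2$ with $\dim(E_l)\ge\overline m+\overline 1$. The embedding lemma shows that, up to definable bijection, the poset $\mathcal D_{E_l}$ realises exactly the isomorphism classes of \emph{all} definable sets of dimension $\le\overline m$, independently of $l$: every such set injects definably into $E_l$, and every member of $\mathcal D_{E_l}$ is of course such a set. Viewing both $\mathcal D_{E_1}$ and $\mathcal D_{E_2}$ as cofinal subsystems of the filtered system of all definable sets of dimension $\le\overline m$ (filtered since any two such sets embed into their disjoint union, again of dimension $\le\overline m$), a routine cofinality argument shows that each inclusion induces an isomorphism on the colimits of the functor $\Aut_\C(-)$. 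Since the ambient colimit is manifestly independent of $l$, we conclude
\[
\Omega_{\overline m}(E_1)\;\cong\;\varinjlim_{D\in\mathcal D_{E_1}}\Aut_\C(D)\;\cong\;\varinjlim_{D\in\mathcal D_{E_2}}\Aut_\C(D)\;\cong\;\Omega_{\overline m}(E_2),
\]
as desired.
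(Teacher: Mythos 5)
Your colimit description $\Omega_{\overline m}(E)\cong\varinjlim_{D\in\mathcal D_E}\Aut_\C(D)$ over the directed poset of definable subsets of dimension at most $\overline m$ is correct, but the closing ``routine cofinality argument'' is a genuine gap, and it is where the proof fails. To compare the two colimits you must say what the morphisms of the ambient system of all dimension-$\le\overline m$ sets are. If they are all definable injections, then the system contains every definable automorphism $\sigma:D\to D$ as a morphism, and functoriality of $\Aut_\C(-)$ forces $\sigma$ to act by conjugation; the defining relations of a colimit of groups then identify the image of $g$ with the image of $\sigma g\sigma^{-1}$ for all $g,\sigma\in\Aut_\C(D)$, so the canonical map from $\Aut_\C(D)$ to the ambient colimit factors through $\Aut_\C(D)^{ab}$ (the normal subgroup generated by the elements $g^{-1}\sigma g\sigma^{-1}$ is the whole commutator subgroup). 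Since $\Aut_\C(D)$ injects into $\Omega_{\overline m}(E_l)$, the comparison map from $\Omega_{\overline m}(E_l)$ to the ambient colimit is therefore very far from injective; equivalently, the inclusion of the poset $\mathcal D_{E_l}$ is not a final functor, because the relevant comma categories are disconnected (a zigzag of inclusions preserves the underlying embedding $D\to E_l$, so distinct embeddings lie in distinct components). If instead you take the ambient system to be the preorder ``there exists a definable injection,'' then $\Aut_\C(-)$ is not a functor on it at all: the induced map depends on the chosen injection and no composition-compatible choice exists. This coherence failure is precisely why Bass's formula (Theorem \ref{K1Bass}) carries an abelianization and why Remark \ref{ctblcofinal} insists on an honest chain $s_{n+1}\cong s_n\ast a_n$. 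So even granting your embedding lemma, the last paragraph does not produce an isomorphism of (non-abelian) groups.

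The paper's route avoids this entirely by proving a different key lemma (Lemma \ref{defbij}): given $\dim(D_l)\geq\overline m+\overline 1$ for $l=1,2$, there is a definable \emph{bijection} $g:D\to D_2$ with $\dim(D_1\cap D)\geq\overline m+\overline 1$, proved by decomposing into blocks and concentrating dimension one coordinate at a time. An honest bijection transports supports, so conjugation by $g$ is a genuine group isomorphism $\Omega_{\overline m}(D)\cong\Omega_{\overline m}(D_2)$, and the remaining comparison of $\Omega_{\overline m}(E_1)$ with $\Omega_{\overline m}(D)$ through their large common definable subset proceeds as in \cite[Proposition~5.4]{BK}, with no colimit over a category of embeddings anywhere. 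Your coordinatewise analysis (projecting, concentrating dimension in each factor) is close in spirit to the proof of Lemma \ref{defbij}, and the step you yourself flag as delicate---assembling factorwise maps into one definable map with disjointness bookkeeping---is exactly what that lemma's block construction accomplishes. To repair your argument you would need to upgrade your embedding lemma from injections feeding a colimit comparison to bijections onto $E_2$ with controlled overlap with $E_1$; at that point you have reproduced the paper's proof.
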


The proof of \cite[Proposition~5.4]{BK} can be readily adapted to obtain a proof of the above except for its first line, which is the content of Lemma \ref{defbij}. (Recall that for the case of PIDs, \cite[Lemma~6.4]{BK} plays the same role as Lemma \ref{defbij}.) 

Before stating and proving the lemma, we need some technical details about special definable sets called `blocks'. Recall from \cite[Definition and Lemma~3.1.6]{Kuber1} that $B\in\C$ is a \textit{block} if $B=A\setminus\bigcup_{j=1}^t A_j$ for some $A,A_j\in\overline {\mathcal L}(M _S)$ with $A_j\subsetneq A$ for each $1\leq j\leq t$. In view of Remark \ref{theoryclosedss}, \cite[Remark~3.1.7]{Kuber1} yields that each block is nonempty.

\begin{rmk}\label{blockdim}
It follows from \cite[Lemma~2.5.7]{Kuber1} that every set in $\C$ can be written as a finite disjoint union of blocks. Suppose $B=A\setminus\bigcup_{j=1}^tA_j,B'=A\setminus\bigcup_{j=1}^{t'}A'_j\in\C$ are blocks. Then $\pi_i(B)$ is a block in $\C_i$ for each $1\leq i\leq k$. Moreover, $B\cap B'$ is non-empty, and $\dim(B)=\dim(A)=\dim(B')$.
\end{rmk}

Now we are ready to state and prove the anticipated lemma.
\begin{lem}\label{defbij}
Let $D_1,D_2\in\C$ and $\overline m=(m_1,\cdots,m_k)\in\mathbb N^k$. If $\dim(D_l)\geq\overline m+\overline 1$ for $l=1,2$, then there exists $D\in\C$ with a definable bijection $g:D\to D_2$ such that $\dim(D_1\cap D)\geq\overline m+\overline 1$.
\end{lem}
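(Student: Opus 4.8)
The plan is to build the definable bijection $g$ by realizing both $D_1$ and $D_2$ as finite disjoint unions of blocks and then matching a suitable block of $D_2$ against a block of $D_1$ that meets it in large dimension. First I would invoke Remark \ref{blockdim} to write $D_1=\bigsqcup_{a=1}^{s}B_a$ and $D_2=\bigsqcup_{b=1}^{s'}B'_b$ as finite disjoint unions of blocks. Since $\dim$ is computed componentwise through the projections $\pi_i$ and dimension is additive under disjoint union in each coordinate (the degree of $[\,\cdot\,]$ in $K_0(\C_i)\cong\mathbb Z[X]$), the hypothesis $\dim(D_l)\geq\overline m+\overline 1$ forces, for each coordinate $i$, at least one block in each decomposition whose $i$th dimension is $\geq m_i+1$. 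The subtlety is that a single block need not be large in \emph{all} coordinates simultaneously, so I expect to choose blocks carefully.

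The key idea is that for blocks the dimension is controlled entirely by the ambient $pp$-definable coset: if $B=A\setminus\bigcup_j A_j$ then $\dim(B)=\dim(A)$ by Remark \ref{blockdim}. So I would pick a block $B'=A\setminus\bigcup_j A'_j$ of $D_2$ whose dimension vector $\dim(B')=\dim(A)$ is coordinatewise maximal among the blocks of $D_2$; additivity of dimension then guarantees $\dim(B')\geq\overline m+\overline 1$, since otherwise summing block contributions could not reach $\dim(D_2)\geq\overline m+\overline 1$ in every coordinate. Next I would use Theorem \ref{ppsemi} together with the product decomposition $M\cong\prod_i M_i$ (Remark \ref{composemi}) to produce a block $B$ inside $D_1$ of the \emph{same} ambient dimension as $A$: concretely, I would choose $A$ itself (a $pp$-definable coset of a submodule of $M^n$) as a target and transport it into $D_1$. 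Because $\overline{\mathcal X}^*(M_S)\cong\mathbb N^k$ and every color is determined by its dimension vector, any two $pp$-definable cosets of the same dimension vector are $pp$-definably bijective; Proposition \ref{funR} (applied coordinatewise via the equivalence $\mathrm{Mod}\text{-}S\simeq\prod_i\mathrm{Mod}\text{-}S_i$) supplies an explicit affine definable bijection between them.

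Concretely, I would set $D:=B'$ and construct $g:D\to D_2$ as the inclusion $B'\hookrightarrow D_2$ (so $g$ is trivially a definable bijection onto its image $D_2$ after reassembling the other blocks—more precisely, I take $g$ to be a definable bijection $D\to D_2$ that restricts to the identity on $B'$ and matches the remaining blocks of $D_2$ with a definable set $D\setminus B'$ chosen of the appropriate colors, which exists since the monoid of colors is all of $\mathbb N^k$). The crucial point is that $B'\subseteq D$ has $\dim(B')\geq\overline m+\overline 1$ and $B'\subseteq D_2$, while the block $B\subseteq D_1$ with the same ambient color satisfies $B\cap B'\neq\emptyset$ with $\dim(B\cap B')=\dim(B')\geq\overline m+\overline 1$ by Remark \ref{blockdim} (two blocks with the same ambient $pp$-coset meet in a set of full dimension). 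Hence $\dim(D_1\cap D)\geq\dim(B\cap B')\geq\overline m+\overline 1$, as required.

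\textbf{Main obstacle.} The hard part will be the coordinatewise bookkeeping: ensuring that a single block can be selected that is simultaneously large in every coordinate $i$, rather than merely large in one coordinate at a time. This is exactly where I expect to lean on Remark \ref{blockdim}, namely that $\pi_i(B)$ is a block in each $\C_i$ and $\dim(B)=\dim(A)$ is an honest coset dimension vector, so largeness is governed by a single ambient color in $\mathbb N^k$ rather than by an interaction of several blocks. Verifying that two blocks sharing the same ambient $pp$-coset intersect in a set of the full dimension vector (the last assertion of Remark \ref{blockdim}) is the technical heart, and I would deduce it from the closure under products (Remark \ref{theoryclosedss}) which forces the relevant indices of $pp$-subgroups to be infinite, so that removing the finitely many proper $pp$-subcosets $A_j,A'_j$ cannot lower the dimension.
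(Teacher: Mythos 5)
There is a genuine gap at the central step of your argument. You claim that, after decomposing $D_2$ into blocks, ``additivity of dimension guarantees'' a single block $B'$ with $\dim(B')\geq\overline m+\overline 1$. This is false: the dimension vector of a finite disjoint union is the \emph{componentwise maximum} of the dimension vectors of the pieces (in each coordinate it is the degree of a sum of polynomials), not a sum, and the maximum in different coordinates can be attained by \emph{different} blocks. Concretely, take $k=2$, $M=M_1\times M_2$, $\overline m=(0,0)$, and $D_2=Me_1\sqcup Me_2\subseteq M$: both pieces are $pp$-definable submodules, hence blocks, and $\dim(D_2)=(1,1)\geq\overline m+\overline 1$, yet the two blocks have dimensions $(1,0)$ and $(0,1)$, so no single block satisfies $\dim(B')\geq(1,1)$. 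You flagged exactly this issue yourself (``a single block need not be large in all coordinates simultaneously'') but the proposed resolution simply asserts it away. There is also a secondary problem: the intersection statement of Remark \ref{blockdim} applies to two blocks with the \emph{same ambient set} $A$, whereas your blocks $B\subseteq D_1$ and $B'\subseteq D_2$ only share a \emph{color}; two blocks whose ambient cosets are merely $pp$-definably bijective can be disjoint (parallel cosets), so $\dim(B\cap B')=\dim(B')$ does not follow without first transporting one ambient coset onto the other. Finally, the description of $g$ is internally inconsistent: you set $D:=B'$ but then speak of matching the remaining blocks of $D_2$ with $D\setminus B'=\emptyset$.

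The paper's proof is designed precisely to avoid the single-large-block trap. Instead of one block, it extracts from each $D_l$ a disjoint union $C_l=\bigsqcup_{i=1}^k C_l^i$ of $k$ ``axis slices,'' where $C_l^i$ is obtained by choosing a block whose $i$-th coordinate dimension is maximal and freezing all other coordinates at points; each slice is large in one coordinate only, but their union recovers $\dim(C_l)=\dim(D_l)$. The matching is then done coordinatewise: for each $i$, both slices $C_1^i$ and $C_2^i$ are embedded into a common power $M_i^{m^i}$, where the block-intersection property (now legitimately applied inside $\C_i$, after arranging a common ambient set via the embedding and splitting maps) produces $E_i$ embedding into both with $\dim(E_i)>m_i$; the bijection $g$ is obtained by swapping the image of $E=\bigsqcup_i E_i$ in $D_2$ for its image in $D_1$. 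Any correct repair of your argument would have to introduce some such per-coordinate device; as written, the proposal does not prove the lemma.
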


\begin{proof}
Suppose $D_1^{1}:=D_1=\bigsqcup_{p=1}^{t_1} B^{1}_{p}$ is a decomposition of $D_1$ into blocks guaranteed by Remark \ref{blockdim}. Then there exists some $p\in[t_1]$ such that $\dim(\pi_1(B^{1}_p))=(\dim(D_1))_1$. Thus, there exist $a_i\in\pi_i(D_1^1)$ for $i\neq 1$ such that $C_1^1:=\{(x,a_2,\cdots,a_n)\mid x\in \pi_1(B_p^1)\}\subseteq D_1^1$. Note that $\dim(C_1^1)=((\dim(D_1))_1,0,\cdots,0)$.

Successively incrementing $i$ from $1$ to $k-1$ in steps of size $1$, we set $D_1^{i+1}:=D_1^i\setminus C_1^i$ and repeat the process to obtain $C_1^{i+1}$ for each $i\in\{1,\cdots,k-1\}$. The construction ensures that $C_1^{i_1}\cap C_1^{i_2}=\emptyset$ for $1\leq i_1< i_2\leq k$, $C_1:=\bigsqcup_{i=1}^k C_1^i\subseteq D_1$ and $\dim(C_1)=\dim(D_1)$. We can also obtain $C_2:=\bigsqcup_{i=1}^k C_2^j\subseteq D_2$ in a similar manner.

Assume without loss of generality that $m_1^1:=(\dim(C_1^1))_1\leq (\dim(C_2^1))_1=:m_2^1$. Since $\pi_1(C_l^1)$ is a block in $\C_1$ for $l=1,2$ by Remark \ref{blockdim}, there is a definable embedding of $\pi_1(C_l^1)$ into $M_1^{m_l^1}$, say with image ${C'}_l^1$. Clearly $\overline x\mapsto(\overline x,\overline 0)$ defines an embedding $i_1:M_1^{m_1^1}\rightarrowtail M_1^{m_2^1}$ with a splitting, say $p_1$. Then $E_1:={C'_1}^1\cap p_1({C'_2}^1)$ satisfies $\dim(E_1)=m_1^1>m_1$ thanks to the final statement of Remark \ref{blockdim}. Note that $\{(x,a_2,\cdots,a_n)\mid x\in E_1\}\subseteq C_1^1$. Analogously, there is an inclusion of $E_1$ into $C_2^1$.

Repeating the above process for each $1\leq i\leq k$, we obtain $E_i$ that embeds into both $C_1^i$ and $C_2^i$, and satisfies $\dim(E_i)>m_i$. Therefore, the images of $E:=\bigsqcup_{i=1}^k\{(0,\cdots,0,x_i,0,\cdots,0)\mid x_i\in E_i\}$ in $D_1$ and $D_2$ are isomorphic and  $\dim(E)\geq\overline m+\overline 1$. By replacing the image of $E$ in $D_2$ by the image of $E$ in $D_1$, we get the required arrow $g$.
\end{proof}

%In other words, while working with definable subsets of infinite modules over semisimple rings instead of dimension, we define a dimension vector, which componentwise acts as dimension of definable subsets over $({M_i})^q_{M_{q_i}(R_i)}$, and assigning this dimension vector is justified because we have the following: 
%If $D \in \C,$ then there are $m_j\geq 1$ such that $\pi_j(D)\subseteq M_j^{m_j}$. In fact, there are natural embeddings $$\Aut_{\C}(D) \hookrightarrow \Aut_{\C}(\pi_1(D) \times \pi_2(D)) \hookrightarrow \Aut_{\C}(({M_1}^{m_1} \times {M_2}^{m_2}) \hookrightarrow \Aut_{\C}(({M_1} \times {M_2})^m),$$ where $m:= \max\{m_1,m_2\}$. 
 %TO  STRONGLY JUSTIFY
%PROPOSITION 5.4 (BK) TO BE REWRITTEN

% \begin{defn}\label{diag}
% Let $M_1$ and $M_2$ be right $R_1$ and $R_2$-module respectively. We say that $M_1\times M_2$ is a right $R_1\times R_2$ module with respect to the componentwise action when $(r_1,r_2).(m_1,m_2) = (r_1.m_1,r_2,m_2)$ and $r_1.m_2 =0 = r_2.m_1 $ for all $(r_1,r_2)\in R_1 \times R_2$, $(m_1,m_2) \in M_1\times M_2$ 
% \end{defn}
% \begin{rmk}
%     This definition emphasizes the fact that even if there is a unital ring homomorphism from $R_1 \rightarrow R_2$, the action of $R_1$ over a right $R_2$-module $M_2$ is always chosen to be trivial.
% \end{rmk}
% Using the Definition \ref{diag}, we record the following lemma that will be useful in the upcoming computations.

Given $\overline m:=(m_1,\cdots,m_k)\in\mathbb N^k$, define $M^{\overline{m}}:=\prod_{i=1}^kM_i^{m_i}$.
\begin{rmk}\label{cofinalss}
If $D \in \C,$ then there are $l_i\geq 1$ such that $\pi_i(D)\subseteq M_i^{l_i}$, and hence there are natural embeddings $$\Aut_{\C}(D) \hookrightarrow \Aut_{\C}\left(\prod_{i=1}^k\pi_i(D)\right) \hookrightarrow \Aut_{\C}\left(M^{\overline l}\right) \hookrightarrow \Aut_\C(M^{l\cdot\overline 1}),$$ where $l:= \max\{l_1,l_2\cdots l_k\}$. This observation yields that $(M^{n\cdot\overline 1})_{n\geq 1}$ is a cofinal sequence in $\C$, and hence thanks to  Remark \ref{ctblcofinal}, we have $$K_1(M_S)\cong\varinjlim_{n\in\mathbb N}\Aut_{\C}\left(M^{n\cdot\overline 1}\right).$$
\end{rmk}

For all $n \in \mathbb N$, set $\Omega_n^n(\C):=\Aut_{\C}(M^{n\cdot\overline 1})$. For each $0\leq m<n$, let $\Omega^n_m(\C):=\Omega_{m\cdot\overline 1}(M^{n\cdot\overline 1})$ (Proposition \ref{autinftydimss}). For each $n\geq 1$, let $\Upsilon^n(\C)$ denote the subgroup of $\Aut_{\C}(M^{n\cdot\overline 1})$ consisting only of $pp$-definable bijections. The notations in this paragraph are also used when we replace $M$ with $M_i$, and $\C$ with $\C_i$.
\begin{rmk}\label{ppss}
Since $S$ is a von Neumann regular ring, Remark \ref{vNRop} yields that the theory of $M_S$ eliminates quantifiers. As a result, $\Upsilon^n(\C)$ is the group of definable $S$-linear automorphisms of $M^{n\cdot\overline 1}$. In other words, $\Upsilon^n(\C)\cong GL_n(S^{op})\ltimes M^{n\cdot\overline 1}$, , where $GL_n(S^{op})$ acts on $M^{n\cdot\overline 1}$ on the right by matrix multiplication.
\end{rmk}
\begin{lem}\label{smprd}
Using the notations and hypotheses of Theorem \ref{K1semisimple}, we have$$GL_n\left(S^{op}\right)\ltimes M^{n\cdot\overline 1}\cong \prod_{i=1}^k \left(GL_n(S_i^{op})\ltimes M_i^n \right).$$
\end{lem}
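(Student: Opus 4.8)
The plan is to establish the claimed isomorphism by exhibiting a matching decomposition on both factors of the semidirect product. The key structural facts are the ring decomposition $S^{op}\cong\prod_{i=1}^k S_i^{op}$ (Remark \ref{vNRop} guarantees each $S_i^{op}$ is again a matrix ring over a division ring, and opposite rings distribute over finite products) and the module decomposition $M\cong\prod_{i=1}^k M_i$ with $M_i=Me_i$ from Remark \ref{composemi}. Taken componentwise, $M^{n\cdot\overline 1}=\prod_{i=1}^k M_i^n$ as an $S$-module, where the $i^{th}$ factor is acted on only through the $i^{th}$ coordinate ring $S_i^{op}$.

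First I would treat the $GL$ factor. Since $S^{op}\cong\prod_{i=1}^k S_i^{op}$ as rings and this is a finite product, the functor $GL_n(-)$ sends it to a direct product of general linear groups; concretely, an invertible $n\times n$ matrix over $\prod_i S_i^{op}$ is, entrywise, a tuple of matrices, and invertibility is checked coordinatewise because the idempotents $e_i$ are central and orthogonal. This yields the group isomorphism $GL_n(S^{op})\cong\prod_{i=1}^k GL_n(S_i^{op})$. Next I would observe that the right action of $GL_n(S^{op})$ on $M^{n\cdot\overline 1}$ respects the product decomposition: because $S_j^{op}$ annihilates $M_i$ for $j\neq i$ (again using orthogonality of the $e_i$), the $i^{th}$ block $GL_n(S_i^{op})$ acts only on the $i^{th}$ module factor $M_i^n$ and acts trivially on the others. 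Thus the action splits as a product of the individual actions $GL_n(S_i^{op})\curvearrowright M_i^n$.

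Finally I would assemble the semidirect product. Having identified both $GL_n(S^{op})$ and $M^{n\cdot\overline 1}$ as products indexed by $i$, and having shown that the $i^{th}$ group factor acts only on the $i^{th}$ module factor, the semidirect product decomposes blockwise: a general principle says that if $G=\prod_i G_i$ acts on $N=\prod_i N_i$ with $G_i$ acting trivially on $N_j$ for $j\neq i$, then $G\ltimes N\cong\prod_i(G_i\ltimes N_i)$. Applying this with $G_i=GL_n(S_i^{op})$ and $N_i=M_i^n$ gives exactly $$GL_n(S^{op})\ltimes M^{n\cdot\overline 1}\cong\prod_{i=1}^k\left(GL_n(S_i^{op})\ltimes M_i^n\right),$$ as claimed.

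The main obstacle, though it is largely bookkeeping rather than a deep difficulty, is verifying cleanly that the cross-action vanishes, i.e.\ that the $i^{th}$ copy of $GL_n$ genuinely fixes the $j^{th}$ module coordinate for $j\neq i$. This rests on the orthogonality and centrality of the Wedderburn idempotents $e_i$ and on the fact that $M_i=Me_i$ carries an $S_i$-module structure on which the complementary factors act as zero; once this is spelled out, the semidirect-product decomposition follows formally. I expect everything else to be a routine unwinding of the identifications already recorded in Remarks \ref{composemi} and \ref{ppss}.
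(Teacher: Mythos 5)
Your proposal is correct and matches the paper's proof, which likewise rests on the two observations that $GL_n(S^{op})\cong\prod_{i=1}^k GL_n(S_i^{op})$ and that this group acts componentwise on $M^{n\cdot\overline 1}=\bigl(\prod_{i=1}^k M_i\bigr)^n$; you have merely spelled out the bookkeeping (orthogonal central idempotents, the blockwise splitting of the semidirect product) that the paper leaves implicit. One small slip: Remark \ref{vNRop} only asserts closure of von Neumann regular rings under products and opposites, not that $S_i^{op}$ is a matrix ring over a division ring (that follows instead from Proposition \ref{dvprop}(2)), but this does not affect the argument.
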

\begin{proof}
The proof readily follows from the observation that $GL_n(S^{op})\cong \prod_{i=1}^k GL_n(S_i^{op})$ acts on $M^{n\cdot\overline 1}=(\prod_{i=1}^k M_i)^n$ componentwise.
\end{proof}
Combining Remark \ref{ppss} with the lemma above, we get the following conclusion.
\begin{cor}\label{smprdpp}
For each $n \in \mathbb N$, $\Upsilon^n(\C)\cong \prod_{i=1}^k \Upsilon^n(\C_i) $. 
\end{cor}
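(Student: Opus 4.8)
The plan is to derive Corollary \ref{smprdpp} directly by transporting the group isomorphism of Lemma \ref{smprd} through the identification of $\Upsilon^n$-groups provided by Remark \ref{ppss}. First I would invoke Remark \ref{ppss} twice: once for the ring $S$ to obtain $\Upsilon^n(\C)\cong GL_n(S^{op})\ltimes M^{n\cdot\overline 1}$, and once for each factor ring $S_i$ (applying the remark with $M$ replaced by $M_i$ and $\C$ by $\C_i$, as the paragraph preceding the remark explicitly licenses) to obtain $\Upsilon^n(\C_i)\cong GL_n(S_i^{op})\ltimes M_i^n$ for each $1\leq i\leq k$. This reduces the claim to identifying the semidirect-product descriptions on both sides.

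Next I would simply chain these identifications with Lemma \ref{smprd}. Taking the product of the factor-wise isomorphisms gives
\begin{equation*}
\prod_{i=1}^k\Upsilon^n(\C_i)\cong\prod_{i=1}^k\left(GL_n(S_i^{op})\ltimes M_i^n\right),
\end{equation*}
and Lemma \ref{smprd} identifies this product of semidirect products with the single semidirect product $GL_n(S^{op})\ltimes M^{n\cdot\overline 1}$, which Remark \ref{ppss} in turn identifies with $\Upsilon^n(\C)$. Composing the three isomorphisms yields $\Upsilon^n(\C)\cong\prod_{i=1}^k\Upsilon^n(\C_i)$, as desired.

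The proof is essentially a bookkeeping exercise, so there is no genuine obstacle; the only point requiring a moment's care is verifying that the isomorphisms compose coherently, i.e.\ that the product of the componentwise semidirect-product identifications for the $S_i$ really matches the global identification for $S$. This is precisely what Lemma \ref{smprd} guarantees via its observation that $GL_n(S^{op})\cong\prod_{i=1}^k GL_n(S_i^{op})$ acts componentwise on $M^{n\cdot\overline 1}=\left(\prod_{i=1}^k M_i\right)^n$, so the semidirect-product structures are compatible and the composite is a genuine group isomorphism rather than merely a bijection. Hence the corollary follows immediately by combining Remark \ref{ppss} with Lemma \ref{smprd}.
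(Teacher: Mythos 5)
Your proposal is correct and matches the paper's own argument exactly: the paper derives Corollary \ref{smprdpp} precisely by combining Remark \ref{ppss} (applied to $S$ and to each $S_i$) with Lemma \ref{smprd}. Your added care about the compatibility of the semidirect-product structures is exactly the content of the componentwise-action observation in the proof of Lemma \ref{smprd}, so nothing is missing.
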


We have a chain of normal subgroups of $\Omega^{n}_{n}(\C)$:
\begin{equation}
\Omega^{n}_0(\C)\lhd\Omega^{n}_1(\C)\lhd\cdots\lhd\Omega^{n}_{n-1}(\C)\lhd\Omega^{n}_{n}(\C).
\end{equation}
The group $\Upsilon^n(\C)$ acts on $\Omega^n_{n-1}(\C)$ by conjugation and $\Omega^n_n(\C)=\Upsilon^n(\C)\ltimes\Omega^n_{n-1}(\C)$. Fix some $0<m<n$. Let $\C_{m\cdot\overline 1}(M^{n\cdot\overline 1})$ denote the full subcategory of $\C$ consisting of definable subsets of $M^{n\cdot\overline 1}$ of dimension at most $m\cdot\overline 1$. The restriction of $\sqcup$ equips $\C_{m\cdot\overline 1}(M^{n\cdot\overline 1})$ with a symmetric monoidal structure and $\Omega^n_m(\C)\cong\Aut(\C_{m\cdot\overline 1}(M^{n\cdot\overline 1}))$. We want to find a subgroup $\Upsilon^n_m(\C)$ of $\Omega^n_m(\C)$ such that $\Omega^n_m(\C)=\Upsilon^n_m(\C)\ltimes\Omega^n_{m-1}(\C)$.

Let $\Sigma_j$ denote the permutation group of a finite set of size $j$, and $\overline\Sigma$ denote the finitary permutation group of a countably infinite set.

For $\overline j:=(j_1,\cdots,j_k)\in\mathbb N^k$, let $S_{m,\overline j}\in\C_{m\cdot\overline 1}(M^{n\cdot\overline 1})$ denote a copy of $\prod_{i=1}^k(\bigsqcup_{l=1}^{j_i}M_i^m)$ in such a way that $S_{m,\overline j}\subseteq S_{m,\overline j'}$ whenever $\overline j\leq\overline j'$ in $\mathbb N^k$. Note that
\begin{equation*}
\Aut_\C(S_{m,\overline 1})\cong\Omega^m_m(\C)\cong\Upsilon^m(\C)\ltimes\Omega^m_{m-1}(\C)\cong\Upsilon^m(\C)\ltimes\Omega^n_{m-1}(\C),
\end{equation*}
where the action of $\Upsilon^m(\C)$ on $\Omega^n_{m-1}(\C)$ is induced by the isomorphism $\Omega^m_{m-1}(\C)\cong\Omega^n_{m-1}(\C)$ given by Proposition \ref{autinftydimss}. For similar reasons, we also have
\begin{equation*}
\Aut_\C(S_{m,\overline j})\cong(\Upsilon^m(\C)\wr\prod_{i=1}^k\Sigma_{j_i})\ltimes\Omega^m_{m-1}(\C)\cong(\Upsilon^m(\C)\wr\prod_{i=1}^k\Sigma_{j_i})\ltimes\Omega^n_{m-1}(\C),
\end{equation*}
where the group $(\Upsilon^m(\C)\wr\prod_{i=1}^k\Sigma_{j_i})$ acts on $\Omega^m_{m-1}(\C)$ by conjugation and permutes lower dimensional subsets of $S_{m,\overline j}\subset M^{n\cdot\overline 1}$. Since $(S_{m,j\cdot\overline 1})_{j\in\mathbb N}$ is a cofinal sequence in $\C_{m\cdot\overline 1}(M^{n\cdot\overline 1})$, Remark \ref{ctblcofinal} yields
\begin{eqnarray*}
\Omega^n_m(\C)&\cong&\varinjlim_{j\in \mathbb N}\Aut_\C(S_{m,j\cdot\overline 1})\\
&\cong&\varinjlim_{j\in \mathbb N}\left((\Upsilon^m(\C)\wr\prod_{i=1}^k\Sigma_j)\ltimes\Omega^n_{m-1}(\C)\right)\\
&\cong&\left(\varinjlim_{j\in \mathbb N}(\Upsilon^m(\C)\wr\prod_{i=1}^k\Sigma_j)\right)\ltimes\Omega^n_{m-1}(\C)\\
&\cong&\left(\Upsilon^m(\C)\wr\prod_{i=1}^k\overline\Sigma\right)\ltimes\Omega^n_{m-1}(\C).
\end{eqnarray*}
Let $\Upsilon^{n}_{m}(\C):=\Upsilon^{m}(\C)\wr\prod_{i=1}^k\overline\Sigma$. Note that $\Upsilon^{n}_{m}(\C)$ acts on $\Omega^{n}_{m-1}(\C)$ by conjugation.
Thus each $\Omega^n_n(\C)$ is an iterated semi-direct product of certain wreath products as in the following expression.
$$\Omega^n_n(\C)\cong\Upsilon^n(\C)\ltimes(\Upsilon^n_{n-1}(\C)\ltimes(\Upsilon^n_{n-2}(\C)\ltimes(\cdots(\Upsilon^n_1(\C)\ltimes\Omega^n_0(\C))\cdots))).$$

Thanks to Corollary \ref{smprdpp} and the fact that all conjugation actions in the above expression are componentwise, we conclude $\Omega^n_n(\C)\cong\prod_{i=1}^k\Omega^n_n(\C_i)$. This completes the proof of Theorem \ref{K1semisimple}.

% \begin{rmk}\label{algK1commuteswithproducts}
% Theorem \ref{K1semisimple} is a partial model-theoretic analogue of algebraic $K_1$ \cite[Example~III.1.1.3]{Weibel} which states that algebraic $K_1$ commutes with finite products. 
% \end{rmk}

Since $\mathrm{Free}(S)$ is cofinal in $\prod_{i=1}^k\mathrm{Free}(S_i)$ (Remark \ref{cofinalss}) and algebraic $K_1$ commutes with finite products, we obtain the following consequence of Theorems \ref{algmoddiv}, \ref{K1Final} and \ref{K1semisimple}.
\begin{cor}\label{algmodss}
If $S=\prod_{i=1}^kS_i$ is a semisimple ring with each $S_i$ infinite, then there is a natural embedding of $K_1^\oplus(S)$ into $K_1(S_S)$ given by a $k$-tuple of maps as in Remark \ref{algmoddivrmk}.
\end{cor}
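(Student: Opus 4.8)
The plan is to exhibit the embedding as the map on $K_1$ induced by the inclusion functor $\mathrm{Free}(S)\to\C(S_S)$ sending a finitely generated free $S$-module to its underlying definable set, and then to prove injectivity after decomposing both $K$-groups as products over $i\in\{1,\dots,k\}$. Applying Theorem \ref{K1semisimple} to the regular module $M_S=S_S$, for which $M_i=Se_i=S_i$ is infinite by hypothesis, gives $K_1(S_S)\cong\prod_{i=1}^kK_1((S_i)_{S_i})$. On the algebraic side, the cofinality of the diagonal free modules $S^n$ (in the spirit of Remark \ref{cofinalss}) together with the fact that algebraic $K_1$ commutes with finite products yields $K_1^\oplus(S)\cong\prod_{i=1}^kK_1^\oplus(S_i)$, since $GL_n(S)\cong\prod_{i=1}^kGL_n(S_i)$ and a finite product commutes with abelianization and with the colimit over $n$. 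I would then argue that the inclusion-functor-induced map is compatible with these two decompositions, so that it is the product $\prod_{i=1}^k$ of per-factor maps $K_1^\oplus(S_i)\to K_1((S_i)_{S_i})$.

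For each factor $S_i=M_{q_i}(R_i)$ I would prove injectivity of the per-factor map by reducing to the infinite division ring $R_i$. Algebraic Morita invariance gives $K_1^\oplus(M_{q_i}(R_i))\cong K_1^\oplus(R_i)$, Theorem \ref{algmoddiv} supplies the injection $K_1^\oplus(R_i)\hookrightarrow K_1((R_i)_{R_i})$, and the weak Morita invariance of Corollary \ref{K1Morita} (itself a consequence of Theorem \ref{K1Final}) furnishes $K_1((R_i)_{R_i})\cong K_1((M_{q_i}(R_i))_{M_{q_i}(R_i)})$; the composite of these three maps is injective. Unwinding the identifications through Theorem \ref{K1Final} and using $GL_n(M_{q_i}(R_i))\cong GL_{nq_i}(R_i)$, the $i$-th component sends $A\in GL_n(M_{q_i}(R_i))$ (not coming from $GL_{n-1}$) to its Dieudonn\'e determinant in the leading term $GL_{nq_i}(R_i)^{ab}\cong R_i^\times/[R_i^\times,R_i^\times]$ of $(\Omega^n_n(\C_i))^{ab}$, exactly as in Remark \ref{algmoddivrmk}. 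Since a finite product of injections is injective, assembling the $k$ components produces the desired natural embedding of $K_1^\oplus(S)$ into $K_1(S_S)$, described by the promised $k$-tuple of maps.

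The step I expect to be the main obstacle is the compatibility asserted at the end of the first paragraph, namely that the single map induced by $\mathrm{Free}(S)\to\C(S_S)$ genuinely factors as the product of the per-factor maps, and that each such factor agrees with the Morita composite used to certify injectivity. To establish the former I would track the image of $S^n=\prod_{i=1}^kS_i^n$ under the inclusion functor: an $S$-linear automorphism of $S^n$ lies in the linear part $GL_n(S^{op})$ of $\Upsilon^n(\C)\cong GL_n(S^{op})\ltimes M^{n\cdot\overline 1}$ (with $M^{n\cdot\overline 1}=S^n$) with trivial translation, and Lemma \ref{smprd} with Corollary \ref{smprdpp} identify $GL_n(S^{op})\cong\prod_{i=1}^kGL_n(S_i^{op})$ acting componentwise. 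Since the proof of Theorem \ref{K1semisimple} realizes $\Omega^n_n(\C)\cong\prod_{i=1}^k\Omega^n_n(\C_i)$ with all conjugation actions componentwise, passing to abelianizations and to the colimit over $n$ respects the product, which is what is required. The naturality of the resulting embedding comes solely from the inclusion functor; the non-functorial Morita isomorphisms enter only in verifying injectivity factor by factor, consistent with the weak (non-functorial) nature of the Morita comparison emphasised earlier.
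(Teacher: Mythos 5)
Your overall architecture matches the paper's: define the map via the inclusion functor $\mathrm{Free}(S)\to\C(S_S)$, decompose $K_1(S_S)\cong\prod_{i=1}^kK_1((S_i)_{S_i})$ by Theorem \ref{K1semisimple} and $K_1^\oplus(S)\cong\prod_{i=1}^kK_1^\oplus(S_i)$ on the algebraic side, and check that the inclusion-functor map respects both decompositions (your componentwise-action argument via Lemma \ref{smprd} and Corollary \ref{smprdpp} is fine). The genuine gap is in your per-factor injectivity argument. You certify injectivity of $K_1^\oplus(S_i)\to K_1((S_i)_{S_i})$ by the composite
\[
K_1^\oplus(M_{q_i}(R_i))\cong K_1^\oplus(R_i)\hookrightarrow K_1((R_i)_{R_i})\cong K_1((M_{q_i}(R_i))_{M_{q_i}(R_i)}),
\]
whose last arrow is the weak Morita isomorphism of Corollary \ref{K1Morita}. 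But that isomorphism is only an abstract isomorphism between two groups of the same isomorphism type; as Remarks \ref{K0weakMorita} and \ref{K1weakMorita} stress, it is not induced by any functor and there is no canonical choice of it. Hence injectivity of this composite says nothing about injectivity of the inclusion-functor-induced map unless the two maps agree --- and this agreement is precisely what you flag as needed (``each such factor agrees with the Morita composite'') but never establish. ``Unwinding the identifications'' cannot establish it, because there is nothing canonical to unwind; and to \emph{choose} a weak Morita isomorphism making the square commute you would already need to know where $K_1^\oplus(S_i)$ lands inside $K_1((S_i)_{S_i})$ under the natural map and that it lands there injectively, i.e., the very statement you are trying to prove. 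The detour is circular.

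The paper's route avoids the detour, and it is the correct fix: Theorem \ref{K1Final} (Step III of \S~\ref{K1matdiv}, using Lemma \ref{quotfree} and $GL_n(M_{q_i}(R_i))\cong GL_{nq_i}(R_i)$) shows that $(\Omega^n_n(\C_i))^{ab}$ has leading term $(GL_n(S_i))^{ab}\cong R_i^\times/[R_i^\times,R_i^\times]$, which is exactly the structural input needed for the proof of Theorem \ref{algmoddiv} (that is, of \cite[Theorem~9.1]{BK}) to apply verbatim with $S_i=M_{q_i}(R_i)$ in place of a division ring. This gives injectivity of each per-factor inclusion-functor map directly, together with the description of its components demanded by Remark \ref{algmoddivrmk}; Corollary \ref{K1Morita} is never invoked. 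With your second paragraph replaced by this direct argument, the rest of your proposal goes through.
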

 
The proof of Theorem \ref{K1semisimple} follows verbatim to yield the following stronger result.
\begin{thm}\label{partition}
For $1\leq i\leq k$, let $S_i$ be a von Neumann regular ring and $M_i$ be a right $S_i$-module satisfying the hypotheses of Remark \ref{step12}. Suppose $\prod_{i=1}^kS_i$ acts on $\prod_{i=1}^kM_i$ componentwise. Then $$K_1\left(\left(\prod_{i=1}^kM_i\right)_{\left(\prod_{i=1}^kS_i\right)}\right)\cong \prod_{i=1}^kK_1({(M_i)}_{S_i}).$$
\end{thm}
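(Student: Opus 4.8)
The plan is to run the proof of Theorem \ref{K1semisimple} line by line for the product module $M:=\prod_{i=1}^k M_i$ over the ring $S:=\prod_{i=1}^k S_i$, isolating the few places where that argument invoked the semisimple structure (Wedderburn--Artin, matrix rings, division rings) and replacing each such appeal by the hypotheses of Remark \ref{step12} imposed on the individual factors. The first observation is that $S$ is again von Neumann regular, since by Remark \ref{vNRop} the class of von Neumann regular rings is closed under finite products; hence Theorem \ref{quantfree} supplies elimination of quantifiers for $M_S$. Because $\prod_{i=1}^k S_i$ acts on $\prod_{i=1}^k M_i$ componentwise, the identification $M^n\cong\prod_{i=1}^k M_i^n$ is $S$-linear, giving projection functors $\pi_i\colon\C\to\C_i$ exactly as in the semisimple case (cf.\ Remark \ref{composemi}).

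The two facts that the proof of Theorem \ref{K1semisimple} obtained from the special structure --- namely that the theory of $M_S$ is closed under products (Remark \ref{theoryclosedss}) and that $\overline{\mathcal X}^*(M_S)\cong\mathbb N^k$ (Theorem \ref{ppsemi}) --- will now be derived directly from the factorwise hypotheses. By quantifier elimination a $pp$-definable subgroup of $M^n$ is the solution set of a linear system over $S=\prod_{i=1}^k S_i$, and since such a system splits coordinatewise, every $A\in\mathcal L_n^\circ(M_S)$ decomposes as $\prod_{i=1}^k A_i$ with $A_i\in\mathcal L_n^\circ((M_i)_{S_i})$. Consequently $[A:A\cap B]=\prod_{i=1}^k[A_i:A_i\cap B_i]$, and because each factor is $1$ or $\infty$ (the theory of each $M_i$ being closed under products), so is the product; this gives closure under products for $M_S$. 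The same decomposition identifies the colour monoid of $M_S$ with $\prod_{i=1}^k\overline{\mathcal X}^*((M_i)_{S_i})\cong\mathbb N^k$, using $\overline{\mathcal X}^*(M_i)\cong\mathbb N$. With these two facts in hand, Remark \ref{step12} yields Steps I and II verbatim and the dimension-vector framework of Definition \ref{dimvec} applies unchanged.

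It then remains to check that the rest of \S\ref{K1ss} uses nothing beyond the product decomposition of the module category, quantifier elimination, and these two facts. The block-based argument of Lemma \ref{defbij}, and hence Proposition \ref{autinftydimss}, refer only to the projection functors $\pi_i$ and to dimension vectors, so they transfer. For the $pp$-definable automorphisms, quantifier elimination gives $\Upsilon^n(\C)\cong GL_n(S^{op})\ltimes M^{n\cdot\overline 1}$ as in Remark \ref{ppss}, and since $GL_n((\prod_{i=1}^k S_i)^{op})=GL_n(\prod_{i=1}^k S_i^{op})\cong\prod_{i=1}^k GL_n(S_i^{op})$ acts componentwise, the proofs of Lemma \ref{smprd} and Corollary \ref{smprdpp} reproduce $\Upsilon^n(\C)\cong\prod_{i=1}^k\Upsilon^n(\C_i)$. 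The iterated semidirect-product-of-wreath-products decomposition of $\Omega^n_n(\C)$ then goes through with each finitary symmetric factor $\prod_{i=1}^k\overline\Sigma$ respecting the splitting, and since every conjugation action occurring there is componentwise, one concludes $\Omega^n_n(\C)\cong\prod_{i=1}^k\Omega^n_n(\C_i)$. Abelianizing and passing to the colimit over $n$ along the cofinal sequence $(M^{n\cdot\overline 1})_n$ (Remark \ref{cofinalss}, Remark \ref{ctblcofinal}, Theorem \ref{K1Bass}), and using that finite products commute with both abelianization and filtered colimits, finally delivers $K_1(M_S)\cong\prod_{i=1}^k K_1((M_i)_{S_i})$.

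The main obstacle is precisely the step in the second paragraph: one must be sure that all the structure used downstream --- $pp$-definable subgroups, the colour monoid, dimension vectors, and the group $\Upsilon^n$ of $pp$-definable automorphisms --- genuinely factors through $M=\prod_{i=1}^k M_i$. In Theorem \ref{K1semisimple} this factorization was handed to us by Wedderburn--Artin together with the matrix-ring computation of Theorem \ref{matdiv}; in the present generality it must be extracted from the two structural inputs alone: the componentwise action, which splits $M^n$ as $\prod_{i=1}^k M_i^n$, and quantifier elimination for the von Neumann regular ring $S$, which forces every $pp$-definable subgroup to be a solution set of a linear system over $S$ and hence to split. Once this splitting is secured, no other part of the argument touched the semisimple hypothesis, so the proof is indeed verbatim.
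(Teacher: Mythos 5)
Your proposal is correct and takes essentially the same approach as the paper: the paper's entire proof of Theorem \ref{partition} is the single remark that the proof of Theorem \ref{K1semisimple} goes through verbatim under the hypotheses of Remark \ref{step12}. Your write-up does exactly that, and moreover usefully makes explicit the two inputs the paper leaves implicit --- that closure under products and $\overline{\mathcal X}^*(M_S)\cong\mathbb N^k$, which in the semisimple case came from Wedderburn--Artin via Remark \ref{theoryclosedss} and Theorem \ref{ppsemi}, now follow from the coordinatewise splitting of quantifier-free $pp$-definable subgroups over the product ring together with the factorwise hypotheses.
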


If the hypotheses of Remark \ref{step12} fail for a von Neumann regular ring $R$, then our recipe fails to compute the group $K_1(M_R)$.

\subsection*{Declaration of competing interest}

The authors declare that they have no known competing financial interests or personal relationships that could have appeared to influence the work reported in this paper.

\subsection*{Acknowledgments:} The authors thank Mike Prest for pointing out the result regarding elimination of imaginaries for modules over von Neumann regular rings.

\subsection*{Data availability}
No data was used for the research described in the article.

\bibliographystyle{alpha}
\bibliography{main}

\end{document}